\tikzset{partition/.style={fill,circle,inner sep=1pt},
         part/.style={baseline=0,scale=0.5,bend left=45},
         partlabel/.style={below}}
\tikzstyle{pnt}=[draw,ellipse,fill,inner sep=1pt]
\DeclareMathOperator\des{des}
\DeclareMathOperator\Des{Des}
\DeclareMathOperator\maj{maj}
\DeclareMathOperator\area{area}
\DeclareMathOperator\Exc{Exc}
\newtheorem{theorem}{Theorem}
\newtheorem{prop}[theorem]{Proposition}
\newtheorem{lemma}[theorem]{Lemma}
\newtheorem{corollary}[theorem]{Corollary}
\newcommand\beq{\begin{equation}}
\newcommand\eeq{\end{equation}}
\def\A{\mathcal A}
\def\bijsubs{f}
\def\bijhd{g}
\def\genbij{\theta}
\def\S{\mathcal S}
\def\C{\S^C}
\def\I{\mathcal I}
\def\CI{\I^C}
\def\Y{\mathcal Y}
\DeclareMathOperator\hd{HD}
\DeclareMathOperator\Peak{Peak}
\DeclareMathOperator\fp{fp}
\def\fn2{\lfloor \frac{n}{2} \rfloor}
\def\cn2{\lceil \frac{n}{2} \rceil}
\begin{document}

\author{Marilena Barnabei \\
Dipartimento di Matematica \\
Bologna, 40126, ITALY \\
\texttt{marilena.barnabei@unibo.it}\and
Flavio Bonetti \\
Dipartimento di Matematica \\
Bologna, 40126, ITALY \\
\texttt{flavio.bonetti@unibo.it}\and
Sergi Elizalde\thanks{Partially supported by grant \#280575 from the Simons Foundation and by grant H98230-14-1-0125 from the NSA.} \\
Department of Mathematics\\
Dartmouth College \\
Hanover, NH 03755, USA \\
\texttt{sergi.elizalde@dartmouth.edu}
\and
Matteo Silimbani \\
Dipartimento di Matematica \\
Bologna, 40126, ITALY \\
\texttt{matteo.silimbani4@unibo.it}}

\date{}

\title{Two descent statistics over $321$-avoiding centrosymmetric involutions}

\maketitle

\begin{abstract}
Centrosymmetric involutions in the symmetric group $\S_{2n}$ are permutations $\pi$ such that $\pi=\pi^{-1}$ and $\pi(i)+\pi(2n+1-i)=2n+1$ for all $i$, and they are in bijection with involutions of the hyperoctahedral group. We describe the distribution of some natural descent statistics on $321$-avoiding centrosymmetric involutions, including the number of descents in the first half of the involution, and the sum of the positions of these descents. Our results are based on two new bijections, one between
centrosymmetric involutions in $\S_{2n}$ and subsets of $\{1,\dots,n\}$, and another one showing that certain statistics on Young diagrams that fit inside a rectangle are equidistributed. We also use the latter bijection to refine a result from \cite{BBES} stating that the distribution of the major index on $321$-avoiding involutions is given by the $q$-analogue of the central binomial coefficients.
\end{abstract}



\section{Introduction}

The hyperoctahedral group $B_n$ is isomorphic to a subgroup of the symmetric group $\S_{2n}$, namely that of centrosymmetric permutations. We say that a permutation $\pi\in\S_{m}$ is {\em centrosymmetric} if $\pi(i)+\pi(m+1-i)=m+1$ for every $i=1,\ldots,m$. There are several ways to define a bijection between the set $B_n$ and the set of  centrosymmetric permutations in $\S_{2n}$, see e.g. \cite{Egge} for one such bijection.

Aside from their connection with $B_n$, centrosymmetric permutations are interesting in their own right. For instance, it is well known that a permutation is centrosymmetric if and only if the two standard Young tableaux corresponding to $\pi$ via the Robinson-Schensted algorithm are fixed under the Sch{\"u}tzenberger involution (see \cite{schutz} and \cite{KNU} for  details). In a different context, the permutation matrices corresponding to centrosymmetric permutations are the extreme points of a convex subset of $n^2$-dimensional Euclidean space, which is characterized in \cite{cro} by a simple set of linear inequalities.

In recent years, the study of forbidden patterns has been extended to the hyperoctahedral group $B_n$, the natural $B$-analogue of the symmetric group. Elements of $B_n$ are sometimes called signed permutations. In \cite{STE}, Stembridge 
used signed pattern avoidance to give a characterization of the so-called fully commutative top elements of the hyperoctahedral group, which
are elements having an interesting algebraic property. 
The sets of signed permutations avoiding signed patterns of length $2$ were completely characterized in \cite{SIM} and \cite{MW}, and the cardinalities of these sets were computed.

There is also some work in the literature on the distribution of permutation statistics both over centrosymmetric permutations and the hyperoctahedral group, as well as over subsets of these sets. For example, in \cite{bbs:old} the authors determine the descent distribution over the set of centrosymmetric involutions, while the same distribution over centrosymmetric permutations that avoid a pattern of length $3$ is given in \cite{BBS}. More recently, Biagioli et al. studied the distribution of the descent number and the major index both over the hyperoctahedral group \cite{biaze} and over the set of its fully commutative involutions \cite{BIJONA}. 

Involutions in $\S_m$ that avoid the pattern $321$ are particularly well behaved, and the distribution of descents sets over them has interesting connections with the theory of partitions, as shown in~\cite{BBES}.

In this paper we focus on centrosymmetric $321$-avoiding involutions, which we denote by $\CI_{m}(321)$, and we study the distribution of some descent statistics over them. Identifying $B_n$ with centrosymmetric permutations in $\S_{2n}$, the property of being an involution is preserved. The condition of avoiding $321$ can also be translated in terms of avoidance of some signed patterns in $B_n$.

On one hand, we determine the descent polynomial on $\CI_{2n}(321)$, showing that
$$\sum_{\pi\in \CI_{2n}(321)}q^{\des(\pi)}=(1+q)^n.$$
Recall that the major index is defined to be the sum of the descent positions of a permutation.  We observe that, if $\pi\in \CI_{m}(321)$ has a descent at position $i$, then it has also a descent at position $m-i$. Hence, the classical major index of $\pi$ is simply a multiple of its descent number.
For this reason, in addition to whole number of descents we consider two more statistics, which we denote $\des^+$ and $\maj^+$. They are defined as the number of descents in positions $1,\ldots,\lfloor m/2\rfloor$ and the sum of their positions, respectively. When $m$ is odd, these statistics reduce to known statistics on $321$-avoiding involutions studied in~\cite{BBES}, so we will focus on the case that $m=2n$.

An important ingredient in our study of these distributions is the surprising fact that every involution $\pi\in\CI_{2n}(321)$ is uniquely determined by its excedances in the first $n$ positions, and the descent set of $\pi$ can be easily read from these excedances. 
This will allow us to obtain the following generating polynomials:
$$\sum_{\pi\in \CI_{2n}(321)}q^{\des^+(\pi)}=\frac{(1+\sqrt{q})^{n+1}+(1-\sqrt{q})^{n+1}}{2}, \qquad
\sum_{\pi\in \CI_{2n}(321)}q^{\maj^+(\pi)}
=\sum_{h=0}^{n} q^{n-h}{n \choose h}_q,$$
where ${n \choose h}_q$ is the $q$-binomial coefficient. We give both recursive and bijective proofs of these results, exploiting the relationship between centrosymmetric involutions (more precisely, the sets of their excedances) and Young diagrams that fit inside a rectangular box. 

Our results translate easily to $123$-avoiding centrosymmetric involutions, since these are in bijection with $321$-avoiding ones via the complement operation.

One of the tools in our proofs is a bijection showing that certain statistics on Young diagrams that fit inside a rectangle are equidistributed.
In Section~\ref{sec:fp} we use this bijection to generalize the main result from~\cite{BBES},
which gives a bijection between $321$-avoiding involutions and partitions whose Young diagram fits into a square box, with the property that the descent set of the involution is mapped to the so-called hook decomposition of the partition. We show that by modifying the bijection and replacing square boxes by rectangles, one can additionally keep track of the number of fixed points. In particular, we refine a result from~\cite{BBES} stating that the distribution of the major index over the set of $321$-avoiding involutions is given by the $q$-analogue of the central binomial coefficients.

\section{Preliminaries}
\label{sectwo}

A permutation $\pi\in \S_m$ is called {\em centrosymmetric} if $$\pi(i)+\pi(m+1-i)=m+1$$
for every $1\le i\le m$. Equivalently, $\pi$ is centrosymmetric if $\pi^{r}=\pi^c$, where $r$ and $c$ are the usual reverse and complement operations, respectively.
We denote by $\C_m$ the set of centrosymmetric permutations in $\S_m$, and by $\CI_m$ the set of involutions in $\C_m$. 

We say that $\pi\in\S_m$ has a \emph{descent} at position $i$, where $1\le i<m$, if $\pi(i)>\pi(i+1)$. The set of descent positions of $\pi$ is denoted by $\Des(\pi)$.

\noindent Moreover, we denote by $\des(\pi)$ the cardinality of $\Des(\pi)$. The sum of the entries in $\Des(\pi)$ is called the \emph{major index} of $\pi$:
$$\maj(\pi)=\sum_{i\in \Des(\pi)}i.$$
For centrosymmetric permutations, the major index can be easily expressed in terms of the number of descents.
Observe that for $\pi\in\C_{m}$, we have that  $i\in\Des(\pi)$ if and only if $m-i\in\Des(\pi)$. This implies that $\maj(\pi)=m\des(\pi)/2$. Hence, studying the usual major index statistic on centrosymmetric permutations is equivalent to studying the number of descents.

Because of the above symmetry, it makes sense to restrict to the set of descents in positions $1,\ldots,\left\lfloor\frac{m}{2}\right\rfloor$ which allow us to recover the whole set $\Des(\pi)$. Setting $n=\left\lfloor\frac{m}{2}\right\rfloor$, we are interested in the statistics
$$\Des^+(\pi)=\Des(\pi)\cap [n], \qquad \des^+(\pi)=|\Des^+(\pi)|,$$ where we use the notation $[n]=\{1,2,\ldots,n\}$. We then have
\begin{equation}\label{eq:DesDes+} 
\Des(\pi)=\Des^+(\pi)\cup\{m-i:i\in \Des^+(\pi)\}.
\end{equation}
Note that this union is disjoint unless $m$ is even and $m/2\in \Des^+(\pi)$. 
It is now natural to define
$$\maj^+(\pi)=\sum_{i\in\Des^+(\pi)} i.$$

We say that a permutation $\pi\in \S_m$ {\em avoids} the pattern $\tau\in \S_k$ if $\pi$ does not contain a subsequence $\pi(i_1)\pi(i_2)\dots\pi(i_k)$ whose entries are in the same relative order as $\tau(1)\tau(2)\dots\tau(k)$. 
Denote by $\CI_{m}(321)$ the set of $321$-avoiding centrosymmetric involutions in $\S_{m}$. The cardinality of this set was found by Egge~\cite{Egge}, who showed that 
\begin{equation}\label{eq:odd}
|\CI_{2n+1}(321)|=\binom{n}{\fn2}
\end{equation}
and
\begin{equation}\label{eq:even}
|\CI_{2n}(321)|=2^n.
\end{equation}

The first main result of this paper gives the distribution of the statistics $\des^+$ and $\maj^+$ on $\CI_{m}(321)$.
When $m$ is odd, we will see that the distribution of these statistics on $321$-avoiding centrosymmetric involutions can be easily obtained from the results in \cite{BBES} about descents on $321$-avoiding involutions. For this reason, in this paper we will focus on the case when $m$ is even.
 
We point out that, in the even case, the proof given in~\cite{Egge} of Equation~\eqref{eq:even} is not bijective. In Theorem~\ref{cara} we will provide a bijective proof of this simple formula. This bijection will be a key ingredient in the proofs of the formulas giving the distribution of $\des^+$ and $\maj^+$ on $\CI_{2n}(321)$, which appear in Theorems~\ref{altromodo} and~\ref{initial}.

Another definition that will be useful is the notion of  \emph{excedance} of a permutation $\pi$, which is a position $i$ such that $\pi(i)>i$.
We denote by $\Exc(\pi)$ the set of excedances of $\pi$. We use the notation $\{a_1,a_2,\dots,a_r\}_<$ to indicate that $a_1<a_2<\dots<a_r$.

\section{The statistics $\des^+$ and $\maj^+$ on $\CI_{m}(321)$}
\label{main:results}

In this section we give formulas for the generating polynomials for the statistics $\des^+$, $\maj^+$ and $\des$ on $\CI_{m}(321)$. The next three theorems summarize the results in the case that $m$ is even.

\begin{theorem}\label{altromodo}
$$\sum_{\pi\in \CI_{2n}(321)}q^{\des^+(\pi)}=\sum_{k\geq 0} {n+1 \choose 2k}q^k=\frac{(1+\sqrt{q})^{n+1}+(1-\sqrt{q})^{n+1}}{2}.$$
\end{theorem}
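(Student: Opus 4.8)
The plan is to compute the distribution of $\des^+$ over $\CI_{2n}(321)$ by transferring the count to excedance sets in the first $n$ positions, and then to recognize the resulting distribution as $\binom{n+1}{2k}$. The first step is to invoke the characterization recorded in Theorem~\ref{cara}: every $\pi\in\CI_{2n}(321)$ is uniquely determined by the set $E(\pi)=\Exc(\pi)\cap[n]$, and this assignment is a bijection between $\CI_{2n}(321)$ and the family of all subsets of $[n]$ (so in particular $|\CI_{2n}(321)|=2^n$). The key structural claim I would then establish is that the descents in the first half are exactly the right endpoints of the maximal runs of consecutive integers of $E(\pi)$: writing $E(\pi)$ as a disjoint union of maximal blocks of consecutive integers, one has $\Des^+(\pi)=\{\,b : [a,b]\text{ is a maximal run of }E(\pi)\,\}$, so that $\des^+(\pi)$ equals the number of such runs.

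The justification of this claim is the heart of the argument, and I expect it to be the main obstacle, since it requires the full structural description of $321$-avoiding centrosymmetric involutions. Along a maximal run of excedances the values $\pi(a)<\pi(a+1)<\dots<\pi(b)$ increase, because in a $321$-avoiding permutation the excedance values form an increasing subsequence; hence no descent occurs strictly inside a run. At the right end $b$ the next position is not an excedance, so $\pi(b+1)\le b+1\le\pi(b)$ (using centrosymmetry, $\pi(n+1)=2n+1-\pi(n)<\pi(n)$ when $b=n$), which forces $\pi(b)>\pi(b+1)$ and hence a descent. The remaining positions of $[n]$, namely the positions that precede a run or that lie between two non-excedances, are checked to be ascents using that the deficiency values are also increasing. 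This is exactly the assertion that the descent set of $\pi$ can be read off from its excedances.

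Granting this, the theorem reduces to the purely enumerative identity
$$\sum_{\pi\in\CI_{2n}(321)}q^{\des^+(\pi)}=\sum_{E\subseteq[n]}q^{\,r(E)},$$
where $r(E)$ denotes the number of maximal runs of $E$. To evaluate the right-hand side I would count the subsets of $[n]$ with exactly $k$ runs. Encoding $E$ by the padded binary word $0\,s_1s_2\cdots s_n\,0$ of length $n+2$, with $s_i=1$ if and only if $i\in E$, each maximal run of $1$'s contributes exactly one block boundary of type $01$ and one of type $10$; thus the word has precisely $2k$ transitions, located among its $n+1$ internal boundaries. Conversely, any choice of $2k$ of these $n+1$ boundaries as transition sites reconstructs a unique such word, since it must alternate and both begin and end with $0$. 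Therefore the number of subsets of $[n]$ with $k$ runs is $\binom{n+1}{2k}$, giving
$$\sum_{\pi\in\CI_{2n}(321)}q^{\des^+(\pi)}=\sum_{k\ge 0}\binom{n+1}{2k}q^k.$$

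Finally, the closed form is the binomial theorem applied twice: expanding $(1+\sqrt q)^{n+1}$ and $(1-\sqrt q)^{n+1}$ and adding, the terms of odd degree in $\sqrt q$ cancel while those of even degree double, so that
$$\frac{(1+\sqrt q)^{n+1}+(1-\sqrt q)^{n+1}}{2}=\sum_{k\ge 0}\binom{n+1}{2k}q^k,$$
which completes the proof. The run count and this last identity are routine; the only genuinely substantive point is the excedance-to-descent translation of the second paragraph, which I would isolate as a lemma and prove once, to be reused for the companion statistic $\maj^+$ in Theorem~\ref{initial}.
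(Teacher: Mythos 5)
Your proposal is correct and takes essentially the same route as the paper: your excedance-to-descent translation is precisely the paper's Lemma~\ref{conno} (resting on the bijection of Theorem~\ref{cara}), and your count of subsets of $[n]$ with $k$ maximal runs via transition boundaries is equivalent to the paper's bijective proof in Section~\ref{sec:bijective}, which counts lattice paths with $k$ starred peaks by a balls-in-bins argument yielding $\binom{n+1}{2k}$. The only difference is cosmetic: the paper additionally gives a second, recursive proof via $d_n(q)=2d_{n-1}(q)+(q-1)d_{n-2}(q)$, which your argument does not need.
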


\begin{theorem}\label{initial}
$$\sum_{\pi\in \CI_{2n}(321)}q^{\maj^+(\pi)}
=\sum_{h=0}^{n} {n \choose h}_q+(q^n-1)\sum_{h=0}^{n-1} {n-1 \choose h}_q=\sum_{h=0}^{n} q^{n-h}{n \choose h}_q,$$
where ${n \choose h}_q$ is the $q$-binomial coefficient. 
\end{theorem}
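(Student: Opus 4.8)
The plan is to reduce the statement to a purely combinatorial identity about subsets of $[n]$, and then to prove that identity by a decomposition that simultaneously produces both closed forms appearing in the theorem. First I would pass from involutions to subsets. As recalled in the introduction, each $\pi\in\CI_{2n}(321)$ is determined by $E^+(\pi):=\Exc(\pi)\cap[n]$, so $\pi\mapsto E^+(\pi)$ is injective, and since $|\CI_{2n}(321)|=2^n$ it is a bijection onto the $2^n$ subsets of $[n]$. The crucial point is to express $\Des^+(\pi)$ in terms of $E^+(\pi)$. Using that a $321$-avoiding permutation is a shuffle of the increasing subword of its weak-excedance values and the increasing subword of its deficiency values, one checks two adjacency facts for $321$-avoiding involutions: a strict excedance is never immediately followed by a fixed point, and a fixed point is never immediately followed by a strict deficiency. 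These collapse the general rule ``$i$ is a descent iff $i$ is a weak excedance and $i+1$ is a strict deficiency'' into the clean statement that, for $i<n$, $i\in\Des^+(\pi)$ iff $i\in E^+(\pi)$ and $i+1\notin E^+(\pi)$; for the central index $i=n$ the centrosymmetry relation $\pi(n)+\pi(n+1)=2n+1$ shows that $n\in\Des^+(\pi)$ iff $n\in E^+(\pi)$. Adopting the convention $n+1\notin E^+(\pi)$, this says exactly that $\Des^+(\pi)$ is the set of right endpoints of the maximal runs of consecutive integers in $E^+(\pi)$. Writing $S=E^+(\pi)$, this yields
$$\sum_{\pi\in\CI_{2n}(321)}q^{\maj^+(\pi)}=F_n(q),\qquad F_n(q):=\sum_{S\subseteq[n]}q^{\,r(S)},\quad r(S)=\sum_{\substack{i\in S\\ i+1\notin S}} i .$$

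Next I would set up a recurrence for $F_n$ by conditioning on the maximal run of $S$ ending at $n$. Subsets with $n\notin S$ contribute $F_{n-1}(q)$. If $n\in S$, let the run ending at $n$ be $\{n-a+1,\dots,n\}$ with $1\le a\le n$; this run contributes $n$ to $r(S)$, the position $n-a$ (when $a<n$) lies outside $S$, and $S\cap[n-a-1]$ is an arbitrary subset of $[n-a-1]$. Summing over $a$ gives
$$F_n(q)=F_{n-1}(q)+q^{n}\Big(1+\sum_{k=0}^{n-2}F_k(q)\Big),\qquad F_0=1,\ F_1=1+q .$$

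Finally I would solve this recurrence. Let $P_n(q)=\sum_{h=0}^n {n\choose h}_q$ be the Galois polynomials, so the middle expression of the theorem is $P_n+(q^n-1)P_{n-1}$ and $\sum_{h=0}^{n-1}{n-1\choose h}_q=P_{n-1}$. Arguing by strong induction with hypothesis $F_k=P_{k+1}-P_k$, the inner sum telescopes, $1+\sum_{k=0}^{n-2}F_k=1+(P_{n-1}-P_0)=P_{n-1}$, whence $F_n=F_{n-1}+q^nP_{n-1}=(P_n-P_{n-1})+q^nP_{n-1}=P_n+(q^n-1)P_{n-1}$; this is the first claimed equality, and it also advances the induction, since the Galois recurrence $P_{n+1}=2P_n+(q^n-1)P_{n-1}$ gives $P_n+(q^n-1)P_{n-1}=P_{n+1}-P_n$. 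The second claimed equality then follows from the same recurrence together with the $q$-Pascal relation ${n\choose h}_q={n-1\choose h-1}_q+q^h{n-1\choose h}_q$, which yields $\sum_{h=0}^{n}q^{\,n-h}{n\choose h}_q=P_{n+1}-P_n$ as well.

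The main obstacle is the reduction in the first step: establishing that $\pi\mapsto E^+(\pi)$ is onto $2^{[n]}$ and, above all, proving the descent rule, which rests on the two adjacency lemmas for $321$-avoiding involutions and on the correct handling of the central position $n$ via centrosymmetry. Once $\maj^+$ is identified with the run-endpoint statistic $r(S)$, everything else is a short induction and routine $q$-binomial algebra. An alternative to the recursive step is a bijective proof: encode $S\subseteq[n]$ as a lattice path and read $r(S)$ off its outer corners, realizing $F_n$ as a generating function by area for Young diagrams inside a rectangle; the identity $F_n=\sum_{h}q^{\,n-h}{n\choose h}_q$ then becomes an instance of the equidistribution bijection on rectangular diagrams advertised in the introduction.
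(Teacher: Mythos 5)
Your proposal is correct, and its first half coincides with the paper's own reduction: the passage from $\CI_{2n}(321)$ to subsets of $[n]$ via $\pi\mapsto E_\pi=\Exc(\pi)\cap[n]$ is Theorem~\ref{cara}, and your descent rule ($i\in\Des^+(\pi)$ iff $i\in E_\pi$ and $i+1\notin E_\pi$, with the central case $i=n$ handled by centrosymmetry) is exactly Lemma~\ref{conno}; your adjacency-lemma derivation is equivalent to the paper's four-case check, and both adjacency facts do hold (indeed they hold for all $321$-avoiding permutations, not just involutions). Citing injectivity from the introduction together with Egge's count $|\CI_{2n}(321)|=2^n$ is a legitimate substitute for the paper's self-contained matching argument, though it imports rather than reproves the hard step of Theorem~\ref{cara}. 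Where you genuinely diverge is the enumeration of $\sum_{E\subseteq[n]}q^{\maj(E)}$. The paper (Lemma~\ref{boh}) conditions only on whether $n$ and $n-1$ lie in $E$, obtaining the three-term recurrence $p_n=(1+q)p_{n-1}+(q^n-q)p_{n-2}$, and then shows in Proposition~\ref{prop:recr} that the area generating polynomial $r_n(q)=\sum_{h}q^{n-h}\binom{n}{h}_q$ over the path family $\A^E_n$ satisfies the same recurrence; moreover, the middle expression of the theorem is obtained in the paper only through the separate bijective proof of Section~\ref{sec:bijective} via $\Peak^*$ and $\hd^*$. You instead condition on the entire maximal run of $E$ ending at $n$, getting the full-history recurrence $F_n=F_{n-1}+q^n\bigl(1+\sum_{k=0}^{n-2}F_k\bigr)$, which telescopes against the Galois numbers $P_n=\sum_{h=0}^n\binom{n}{h}_q$ to give $F_n=P_n+(q^n-1)P_{n-1}=P_{n+1}-P_n$ by one strong induction; I verified the recurrence (the $a=n$ run correctly contributes the isolated $1$), the telescoping with $P_0=1$, and the closing $q$-binomial steps, and all are sound. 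Your route buys both stated closed forms in a single argument, plus the clean identification of the polynomial as a difference of consecutive Galois numbers, with no lattice-path or area interpretation needed; its cost is reliance on the classical recurrence $P_{n+1}=2P_n+(q^n-1)P_{n-1}$, which is standard and in any case a short consequence of $q$-Pascal (summing $\binom{n+1}{k}_q=\binom{n}{k}_q+q^{n+1-k}\binom{n}{k-1}_q$ over $k$ gives $P_{n+1}-P_n=\sum_h q^{n-h}\binom{n}{h}_q$, from which both of your identities follow), whereas the paper's three-term recurrence is fully self-contained and sets up the path/area model that it reuses in its bijective proofs.
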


\begin{theorem}\label{desdistrib}
$$\sum_{\pi\in \CI_{2n}(321)}q^{\des(\pi)}=(1+q)^n.$$
\end{theorem}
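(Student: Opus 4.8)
The plan is to reduce the statement to a bijection-free sum over subsets of $[n]$, using the structural fact behind Theorem~\ref{cara} that $\pi \mapsto E(\pi) := \Exc(\pi)\cap[n]$ is a bijection from $\CI_{2n}(321)$ onto the subsets of $[n]$, together with the assertion from the introduction that the descent set of $\pi$ can be read off from $E(\pi)$. Concretely, I would first record how $\des$ depends on $E := E(\pi)$. Writing $\epsilon_i = 1$ if $i \in E$ and $\epsilon_i = 0$ otherwise, I expect that $\Des^+(\pi)$ consists exactly of the right endpoints of the maximal runs of consecutive elements of $E$, that is, $\Des^+(\pi) = \{\, i \in E : i+1 \notin E \,\}$. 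Granting this, $\des^+(\pi) = r(E)$, the number of maximal runs of $E$, and since $n \in \Des^+(\pi)$ if and only if $n \in E$, formula~\eqref{eq:DesDes+} gives $\des(\pi) = 2\,\des^+(\pi) - [\,n\in E\,] = 2\,r(E) - [\,n\in E\,]$.

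The summation step is then elementary. Set $\epsilon_0 := 0$ and consider the padded word $\epsilon_0\epsilon_1\cdots\epsilon_n$. A short count shows that $2\,r(E) - [\,n\in E\,]$ equals the number of indices $1 \le i \le n$ with $\epsilon_{i-1}\neq\epsilon_i$: each maximal run of $1$'s contributes its opening step, and also its closing step unless the run reaches position $n$. Hence
$$\sum_{\pi\in\CI_{2n}(321)} q^{\des(\pi)} = \sum_{E\subseteq[n]} q^{\,|\{1\le i\le n\,:\,\epsilon_{i-1}\neq\epsilon_i\}|}.$$
Because $\epsilon_0$ is fixed, the map sending $E$ to the vector recording, for each $i=1,\dots,n$, whether $\epsilon_{i-1}\neq\epsilon_i$ is a bijection from the subsets of $[n]$ onto $\{0,1\}^n$: the word $\epsilon_1\cdots\epsilon_n$ is reconstructed from $\epsilon_0=0$ and these $n$ "change/no-change" bits. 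The $n$ adjacencies therefore contribute independently, and the sum factors as $(1+q)^n$, proving the theorem. Equivalently, one can split the subsets according to whether $n\in E$, obtaining for the two partial sums the linear recursion $A_n = A_{n-1}+qB_{n-1}$ and $B_n = qA_{n-1}+B_{n-1}$, whence $A_n+B_n = (1+q)(A_{n-1}+B_{n-1})$ and again $(1+q)^n$.

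The main obstacle is the descent-reading formula $\Des^+(\pi) = \{\, i\in E : i+1\notin E \,\}$ used in the first step; everything afterward is routine. Establishing it requires the precise structure of $321$-avoiding centrosymmetric involutions in terms of their first-half excedances, i.e. the content developed around Theorem~\ref{cara}. The cleanest way I would argue it is to first show that, for any $321$-avoiding permutation, a descent occurs at $i$ exactly when $i$ is a weak excedance and $i+1$ is a deficiency, and then to use centrosymmetry, which makes the excedance/deficiency pattern in positions $n+1,\dots,2n$ the mirror image of that in $1,\dots,n$, together with the explicit first-half pattern determined by $E$, to identify these weak-excedance-to-deficiency transitions with the right endpoints of the runs of $E$. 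In particular, if the identity $\des^+(\pi)=r(E)$ is already available from the proof of Theorem~\ref{altromodo}, where it is exactly what is needed, then Theorem~\ref{desdistrib} follows immediately from the subset computation above.
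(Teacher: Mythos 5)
Your proof is correct, and its summation step takes a genuinely different route from the paper's. Both arguments rest on the same two foundational facts, which the paper isolates as Lemma~\ref{conno} ($\Des^+(\pi)=\{i\in E_\pi : i+1\notin E_\pi\}$, exactly your run-endpoint formula) and Lemma~\ref{des} ($\des(\pi)=2\des^+(\pi)-1$ if $n\in E_\pi$ and $2\des^+(\pi)$ otherwise, via Equation~\eqref{eq:DesDes+}); so the reduction you flag as the main obstacle is indeed available, proved in the paper by a four-case analysis on which of $i,i+1$ lie in $E_\pi$, with centrosymmetry handling the boundary case $i=n$. One caution about your sketch of that lemma: your general claim that in a $321$-avoiding permutation descents are exactly the weak-excedance-to-deficiency transitions is true, but weak excedances include fixed points, which lie outside $E_\pi$; to phrase the lemma in terms of \emph{strict} excedances you must rule out a fixed point at $i$ followed by a deficiency at $i+1$, and this needs the involution property (if $\pi(i)=i$ and $\pi(i+1)=j<i$, then $\pi(j)=i+1$, and the values $i{+}1,\,i,\,j$ in positions $j,i,i+1$ form a $321$). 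Granting the two lemmas, the paper proceeds by induction: each $E\subseteq[n-1]$ yields the two subsets $E$ and $E\cup\{n\}$ of $[n]$, and a case analysis on whether $n-1\in E$ shows the corresponding two involutions in $\CI_{2n}(321)$ have descent numbers $d$ and $d+1$ where $d=\des(\pi)$ for the involution attached to $E\subseteq[n-1]$, whence the generating polynomial gains a factor $(1+q)$ at each step --- this is precisely your alternative $A_n,B_n$ recursion in disguise. Your primary argument instead packages both lemmas into the single identity
$$\des(\pi)=\left|\{\,1\le i\le n : \epsilon_{i-1}\neq\epsilon_i\,\}\right|,\qquad \epsilon_0=0,$$
and composes with the change-bit map $E\mapsto(\delta_1,\dots,\delta_n)$, $\delta_i$ recording whether $\epsilon_{i-1}\neq\epsilon_i$, which is a bijection onto $\{0,1\}^n$ since $\epsilon$ is reconstructed from $\epsilon_0=0$ by successive flips. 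This buys something the paper's induction does not: an explicit bijection from $\CI_{2n}(321)$ to binary words of length $n$ sending $\des$ to the number of $1$s, which \emph{explains} the product formula $(1+q)^n$ rather than verifying it recursively; the paper's proof, in exchange, is shorter to state once the lemmas are in hand and matches the style of its recursive proofs of Theorems~\ref{altromodo} and~\ref{initial}.
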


We will give two different proofs of Theorems~\ref{altromodo} and~\ref{initial}: recursive ones in Section~\ref{sec:recursive} and bijective ones in Section~\ref{sec:bijective}. An important ingredient in all of them is the bijection that we describe in Section~\ref{sec:bij} between $\CI_{2n}(321)$ and the family of all subsets of $[n]$, which we denote by $2^{[n]}$. The same bijection will be used in Section~\ref{sec:proofdes} to prove Theorem~\ref{desdistrib}. 

When $m$ is odd, it is easy to check that every permutation $\pi\in \CI_{2n+1}(321)$ can be decomposed uniquely as $\pi=\alpha\,n{+}1\,\alpha'$, where $\alpha$ is an arbitrary element of $\I_n(321)$ and $\alpha'$ is the sequence $2n+2-\alpha(n),2n+2-\alpha(n-1),\dots,2n+2-\alpha(1)$. Thus, $\CI_{2n+1}(321)$ is in bijection with $\I_n(321)$, and Equation~\eqref{eq:odd} follows from~\cite{SS}.
Additionally, since
$$\des^+(\pi)=\des(\alpha),\quad\maj^+(\pi)=\maj(\alpha),\quad\mbox{and}\quad \des(\pi)=2\des(\alpha),$$
the formulas for the generating polynomials for $\des^+$, $\maj^+$ and $\des$ on $\CI_{2n+1}(321)$ follow from the results in \cite{BBES} about descents on $321$-avoiding involutions: 

\begin{prop}
\begin{align*}
\sum_{\pi\in \CI_{2n+1}(321)}q^{\des^+(\pi)}&=\sum_{k=0}^{\left\lfloor\frac{n}{2}\right\rfloor}{\left\lceil\frac{n}{2}\right\rceil\choose k}{\left\lfloor\frac{n}{2}\right\rfloor\choose k}q^k,\\
\sum_{\pi\in \CI_{2n+1}(321)}q^{\maj^+(\pi)}&={n \choose \left\lfloor\frac{n}{2}\right\rfloor}_q,\\
\sum_{\pi\in \CI_{2n+1}(321)}q^{\des(\pi)}&=\sum_{k=0}^{\left\lfloor\frac{n}{2}\right\rfloor}{\left\lceil\frac{n}{2}\right\rceil\choose k}{\left\lfloor\frac{n}{2}\right\rfloor\choose k}q^{2k}.
\end{align*}
\end{prop}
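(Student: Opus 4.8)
The plan is to combine the bijection and statistic correspondences recorded in the preceding paragraph with the known distributions of $\des$ and $\maj$ over $\I_n(321)$ from \cite{BBES}. Recall that the map $\pi\mapsto\alpha$, sending $\pi=\alpha\,n{+}1\,\alpha'$ to its first half $\alpha=\pi(1)\cdots\pi(n)$, is a bijection $\CI_{2n+1}(321)\to\I_n(321)$ under which $\des^+(\pi)=\des(\alpha)$, $\maj^+(\pi)=\maj(\alpha)$, and $\des(\pi)=2\des(\alpha)$. Summing the appropriate monomials over this bijection therefore transports each generating polynomial on the left-hand side to a generating polynomial over $\I_n(321)$, and it remains only to identify the latter with the cited formulas.

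For completeness I would briefly re-verify the three correspondences. Centrosymmetry with $m=2n+1$ forces $\pi(n+1)=n+1$, so from $\pi(n)=\alpha(n)\le n<n+1=\pi(n+1)$ we see that position $n$ is an ascent; hence $\Des^+(\pi)=\Des(\pi)\cap[n]$ coincides with $\Des(\alpha)$, which gives the first two equalities at once. For the third, the centrosymmetric symmetry $i\in\Des(\pi)\iff 2n+1-i\in\Des(\pi)$, together with the fact that $2n+1-i=i$ has no integer solution (and that positions $n$ and $n+1$ are both ascents), shows that the descents of $\pi$ split into disjoint pairs, exactly one member of each lying in $[n]$; thus $\des(\pi)=2\des^+(\pi)=2\des(\alpha)$.

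It then suffices to insert the two distributions from \cite{BBES}, namely $\sum_{\alpha\in\I_n(321)}q^{\des(\alpha)}=\sum_{k}\binom{\lceil n/2\rceil}{k}\binom{\lfloor n/2\rfloor}{k}q^k$ and $\sum_{\alpha\in\I_n(321)}q^{\maj(\alpha)}={n \choose \lfloor n/2\rfloor}_q$. The first yields the stated polynomial for $\des^+$ verbatim and, after the substitution $q\mapsto q^2$ dictated by $\des(\pi)=2\des(\alpha)$, the stated polynomial for $\des$; the second yields the stated polynomial for $\maj^+$. I expect no genuine obstacle, since the entire analytic content lives in the results of \cite{BBES}; the only point requiring care is the bookkeeping at the three central positions, ensuring that the fixed middle value $n+1$ neither disturbs the value set $[n]$ of $\alpha$ nor creates a spurious descent at position $n$ or $n+1$.
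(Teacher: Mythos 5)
Your proposal is correct and takes essentially the same route as the paper: the unique decomposition $\pi=\alpha\,(n{+}1)\,\alpha'$ giving a bijection $\CI_{2n+1}(321)\to\I_n(321)$ under which $\des^+(\pi)=\des(\alpha)$, $\maj^+(\pi)=\maj(\alpha)$ and $\des(\pi)=2\des(\alpha)$, followed by substituting the distributions of $\des$ and $\maj$ over $\I_n(321)$ from \cite{BBES} (with $q\mapsto q^2$ for the third identity). Your explicit checks at the central positions merely spell out what the paper dismisses as ``easy to check,'' so there is no substantive difference.
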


\subsection{A bijection $\CI_{2n}(321)\to 2^{[n]}$}\label{sec:bij}

Elements of $\CI_{2n}$ can be interpreted as symmetric matchings on $2n$ points. We draw matchings by placing $2n$ points on a horizontal line, labeled from $1$ to $2n$ from left to right, where some pairs of points are matched with an arc. Given $\pi\in\CI_{2n}$, the corresponding matching has an arc between $i$ and $j$ whenever $\pi(i)=j$ (equivalently, $\pi(j)=i$, since $\pi$ is an involution) and $i\neq j$. The centrosymmetric condition is equivalent to the matching being symmetric, that is, invariant under relabeling the points from right to left instead.

Under this interpretation, an involution avoids $321$ if and only if the corresponding matching is non-nesting, meaning that it does not contain two arcs $(i,l)$ and $(j,k)$ with $i<j<k<l$, or an arc $(i,k)$ and a singleton (i.e., a fixed point of the permutation) $j$ with $i<j<k$. Thus, elements of $\CI_{2n}(321)$ are in bijection with symmetric non-nesting matchings of $2n$ points.

For $\pi\in\CI_{2n}(321)$, define \begin{equation}\label{def:Epi} E_\pi=\Exc(\pi)\cap[n],\end{equation} that is, the set of excedances in the first $n$ positions.
If $M$ is the matching corresponding to $\pi$, let $E_M$ denote the set of values in $[n]$ that are matched with a larger value. Clearly, $E_\pi=E_M$. 

\begin{theorem}\label{cara} 
The map
$$\begin{array}{ccc}  \CI_{2n}(321) & \rightarrow & 2^{[n]} \\
\pi & \mapsto & E_\pi \end{array}$$
is a bijection. In particular, elements of $\CI_{2n}(321)$ are uniquely determined by their excedances in $[n]$.
\end{theorem}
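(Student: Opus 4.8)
The plan is to construct an explicit inverse $2^{[n]}\to\CI_{2n}(321)$, which simultaneously yields injectivity and surjectivity (and reproves $|\CI_{2n}(321)|=2^n$). I work throughout with the symmetric non-nesting matchings introduced above. First I classify each of the $2n$ points as an \emph{opener} (matched to a strictly larger point), a \emph{closer} (matched to a strictly smaller point), or a \emph{fixed point}; by definition $E_M$ is exactly the set of openers lying in $[n]$. The reflection $i\mapsto 2n+1-i$ sends openers to closers and preserves fixed points, so the labeling of the second half $\{n+1,\dots,2n\}$ is the mirror image of the labeling of $[n]$ with openers and closers interchanged. Hence it suffices to recover the labeling of $[n]$ from the set of openers it contains.

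The key observation is that this labeling is forced. Reading positions $1,\dots,n$ from left to right, maintain the height $d(i)=(\#\text{openers in }[1,i])-(\#\text{closers in }[1,i])$, the number of currently open arcs. In any valid non-nesting matching one has $d(i)\ge 0$, and a fixed point can occur only at height $0$ (otherwise it would lie inside an open arc, producing a forbidden arc–singleton nesting). Therefore, given the openers, at each non-opener position $i$ the point is a closer if $d(i-1)>0$ and a fixed point if $d(i-1)=0$, with no freedom; this recursion determines $d$ and the entire labeling of $[n]$, and then the whole matching, since a non-nesting matching is recovered from its labeling by pairing the $k$-th opener with the $k$-th closer. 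This establishes injectivity: $\pi$ is determined by $E_\pi$.

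For surjectivity I run the same recursion starting from an arbitrary $S\subseteq[n]$ to label $[n]$, extend to $[2n]$ by the reflection rule, and form the associated non-nesting matching. I must then check that this produces a genuine element of $\CI_{2n}(321)$ with $E_\pi=S$. A short computation shows the height function satisfies $d(i)=d(2n-i)$, so it stays nonnegative on all of $[2n]$ and returns to $0$ at position $2n$; fixed points occur only at height $0$ by construction; and the matching, being the unique non-nesting one for a labeling that is invariant under reflection, is itself symmetric, hence centrosymmetric. Thus $\pi$ is a $321$-avoiding centrosymmetric involution whose openers within $[n]$ are exactly $S$, i.e. $E_\pi=S$.

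The step I expect to require the most care is the gluing at the midpoint in the surjectivity argument: I must verify that the labels assigned to positions $n$ and $n+1$ are compatible, i.e. that continuing the height recursion past position $n$ agrees with the mirrored labeling, so that no height becomes negative and no nesting is created across the center. Checking the three cases for position $n$ (opener, closer, fixed) against the symmetry relation $d(i)=d(2n-i)$ resolves this and confirms that the construction and the map $\pi\mapsto E_\pi$ are mutually inverse.
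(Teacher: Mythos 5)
Your proof is correct, and it takes a genuinely different route from the paper's. The paper proves the theorem by exhibiting an explicit greedy inverse $E\mapsto M_E$: scanning $E\subseteq[n]$ in increasing order, it matches each unmatched $i\in E$ to the smallest unmatched $j>i$ with $j\notin E$, adds the mirrored arc $(2n+1-j,2n+1-i)$, and then proves well-definedness, the non-nesting property, surjectivity ($E_{M_E}=E$), and injectivity via a case analysis showing that any arc choice other than the greedy one creates a nesting, possibly with a reflected arc. You instead factor the bijection through the opener/closer/fixed-point labeling and the height function $d$: fixed points can occur only at height $0$ (else a singleton lies under an open arc), so the labeling of $[n]$ is forced by the opener set, the labeling of the second half is forced by reflection, and the matching is forced by the labeling via first-in-first-out pairing, the unique non-nesting completion. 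This makes injectivity nearly automatic, reduces surjectivity to the identity $d(i)=d(2n-i)$ (which simultaneously gives nonnegativity of the height on the second half and correct placement of fixed points there), and yields symmetry of the constructed matching elegantly from uniqueness of the non-nesting completion of a reflection-invariant labeling, replacing the paper's step-by-step symmetry check. Two facts you use deserve a line of justification in a final write-up, though neither is a gap since both are standard: the FIFO uniqueness claim (if openers $o_1<o_2$ were matched to closers $c_2>c_1$ respectively with $o_2<c_1$, the arcs $(o_1,c_2)$ and $(o_2,c_1)$ would nest), and the identity asserting that the number of arcs spanning position $i$ equals $d(i)$, which underlies your claim that a fixed point forces height $0$. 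In terms of trade-offs, your argument is shorter and more conceptual, with ballot-sequence bookkeeping replacing the paper's ad hoc nesting case analysis, while the paper's construction has the virtue of being a directly drawable algorithm producing the arcs of $M_E$ explicitly, as in its worked example.
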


\begin{proof}
By the above interpretation of $\CI_{2n}(321)$ as symmetric non-nesting matchings of $2n$ points, it will be enough to show that the map $M\mapsto E_M$ is a bijection between such matchings and $2^{[n]}$. We do so by describing its inverse. 

Given $E\subseteq[n]$, construct a matching $M_E$ as follows.
Read the elements of $E$ in increasing order, and for each $i\in E$ that has not been matched, draw an arc from $i$ to the smallest $j\notin E$ such that $j>i$ and $j$ has not been matched yet, and symmetrically draw an arc $(2n+1-j,2n+1-i)$, unless this is the same arc $(i,j)$ that was just added. 

For example for $n=11$ and $E=\{1,4,5,7,8,10\}\subseteq[11]$, we get the matching
\begin{center}
\begin{tikzpicture}[scale=0.6]
   \foreach \i in {1,...,22}
        \node[pnt,label=below:$\i$] at (\i,0)(\i) {};
   \draw[dotted] (11.5,-1)--(11.5,1.5);
   \draw(1)  to [bend left=45] (2);
   \draw(4)  to [bend left=45] (6);
   \draw(5)  to [bend left=45] (9);
   \draw(7)  to [bend left=45] (11);
   \draw(8)  to [bend left=45] (13);
   \draw(10)  to [bend left=45] (15);
   \draw(12)  to [bend left=45] (16);
   \draw(14)  to [bend left=45] (18);
   \draw(17)  to [bend left=45] (19);
   \draw(21)  to [bend left=45] (22);
\end{tikzpicture}
\end{center}

Next we show that, for any $E\subseteq[n]$,  $M_E$ is a well-defined symmetric non-nesting matching.

The symmetry is clear because, at any stage of the process, when adding the arcs $(i,j)$ and $(2n+1-j,2n+1-i)$ (if different), the matching drawn so far is symmetric.

To see that it is well defined, we have to check that for every $i\in E$ that has not been matched, there is always an available vertex $j$ that it can be matched with. This is because since $i$ has not been matched, by symmetry neither has $2n+1-i$, and this value is not in $E$  because it is greater than $n$, so there is always at least one available vertex. This also proves that the arcs $(i,j)$ and $(2n+1-j,2n+1-i)$ added at each step satisfy $i+j\le 2n+1$, and thus $(2n+1-j)+(2n+1-i)\ge 2n+1$.

To show that $M_E$ is non-nesting, suppose first for contradiction that there is a triple $i<j<k$ where $(i,k)$ is an arc and $j$ is a singleton. By symmetry, we can assume without loss of generality that $i+k\le 2n+1$, so the arc $(i,k)$ was added as the first of a pair of symmetric arcs. But then, our construction would have matched $i$ with $j$, since $j\notin E$ (because it is a singleton) and $i<j<k$. 

Similarly, suppose for contradiction that there are two nested arcs $(i,l)$ and $(j,k)$ with $i<j<k<l$. Again, by symmetry, we can assume without loss of generality that $i+l\le 2n+1$, by flipping both arcs if necessary. In fact, we can also assume that  $j+k\le 2n+1$, because otherwise, flipping the arc $(j,k)$ we would still get two nested arcs.
But then, our construction would have matched $i$ with $k$, since $k\notin E$ (because it is matched with a smaller element) and the arc $(j,k)$ had not been added at that point of the process.

Finally, we show that the map $M\mapsto E_M$ is a bijection between the set of symmetric non-nesting matchings of $2n$ points and $2^{[n]}$, whose inverse is the map $E\mapsto M_E$.
To see that it is surjective, we show that for every $E\subseteq[n]$ we have $E_{M_E}=E$. In other words, we show that in $M_E$, the points in $[n]$ matched with larger points are precisely those in $E$. Clearly, the points in $E$ are always matched with larger points, since in our construction neither of the points $j$ and $2n+1-i$ in the pair of arcs $(i,j)$ and $(2n+1-j,2n+1-i)$  is in $E$. So it remains to show that if $2n+1-j\le n$, then $2n+1-j\in E$. This is because if $2n+1-j\notin E$, then, since $i<2n+1-j<j$, the point $i$ would have been matched with $2n+1-j$ rather than with $j$.

To see that the map $M\mapsto E_M$ is injective, we will show that for every $E\subseteq[n]$, the only symmetric non-nesting matching $M$ satisfying $E_{M}=E$ is the matching $M=M_E$. We will argue that at each step of the above construction that scans $E$ in increasing order, the only way to match $i\in E$ and preserve the non-nesting property is by matching it with the smallest $j\notin E$ such that $j>i$ and $j$ has not yet been matched. Suppose that this is not true, and consider the first time that we have some other choice for matching $i\in E$. It is clear that $i$ has to be matched with a vertex $j'\notin E$ such that $j'>i$ and $j'$ has not yet been matched, so suppose we match $i$ with a vertex $j'$ satisfying this property but not being the smallest. Then, we claim that the arc $(i,j')$ would be creating a nesting, which is a contradiction. Indeed, if $j$ does not become matched later on, then the singleton $j$ with the arc $(i,j')$ would form a nesting. If $j$ becomes matched with a point $r$, we know that $r>i$, since all the points in $E$ to the left of $i$ have been matched before $i$. If $i<r<j'$, then the arcs $(i,j')$ and $(r,j)$ (or $(j,r)$) would form a nesting. If $r>j'$, then the conditions $j\notin E$ and $E_{M}=E$ imply that $j>n$, and so the arcs $(i,j')$ and $(2n+1-r,2n+1-j)$ (which is forced by symmetry) would form an nesting, since $i<2n+1-r<2n+1-j<n<j'$, with the first inequality implied again by the fact that the points in $E$ to the left of $i$ have been matched before $i$. Clearly, once we add the arc $(i,j)$, the arc $(2n+1-j,2n+1-i)$ is forced by symmetry, so all the arcs in the construction of $M_E$ are forced.
\end{proof}

\subsection{Recursive proofs of Theorems~\ref{altromodo} and \ref{initial}}\label{sec:recursive}
The set $\Des^+(\pi)$ can be easily recovered from $E_{\pi}$ as follows.

\begin{lemma}\label{conno}
$\Des^+(\pi)=\{i\in [n]|i\in E_{\pi}\land i+1\notin E_{\pi}\}$. 
\end{lemma}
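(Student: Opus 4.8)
The plan is to prove the two inclusions separately, observing that $\{i\in[n]:i\in E_\pi,\ i+1\notin E_\pi\}\subseteq\Des^+(\pi)$ is essentially formal, whereas the reverse inclusion is where $321$-avoidance actually does the work.

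For the easy inclusion I would assume $i\in E_\pi$ and $i+1\notin E_\pi$. From $i\in\Exc(\pi)$ we get $\pi(i)\ge i+1$. When $i<n$, the hypothesis $i+1\notin E_\pi$ means $\pi(i+1)\le i+1$, and a short case check (either $\pi(i+1)<i+1$, or $\pi(i+1)=i+1$ combined with injectivity of $\pi$) yields $\pi(i)>\pi(i+1)$, i.e.\ a descent. The boundary index $i=n$ needs separate handling, since $n+1\notin[n]$ makes the condition ``$n+1\notin E_\pi$'' automatic: here I would invoke centrosymmetry, $\pi(n)+\pi(n+1)=2n+1$, so that $\pi(n)\ge n+1$ forces $\pi(n+1)\le n<\pi(n)$, again a descent.

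For the reverse inclusion I would take $i\in\Des^+(\pi)$, so $\pi(i)>\pi(i+1)$, and prove both $i\in E_\pi$ and $i+1\notin E_\pi$ by contradiction, each time exhibiting a forbidden $321$ pattern. To see $i\in E_\pi$, assume $\pi(i)\le i$ and set $a=\pi(i)$, so that $\pi(a)=i$ by the involution property. If $a<i$, the positions $a<i<i+1$ carry the decreasing values $i>a>\pi(i+1)$; if instead $a=i$ (a fixed point), then $b:=\pi(i+1)<i$ satisfies $\pi(b)=i+1$, and the positions $b<i<i+1$ carry $i+1>i>b$. Either case gives a $321$, a contradiction. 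To see $i+1\notin E_\pi$ (only needed when $i<n$), assume $\pi(i+1)>i+1$; then $c:=\pi(i)>\pi(i+1)=:d>i+1$, so $\pi(c)=i$, and the positions $i<i+1<c$ carry the decreasing values $c>d>i$, once more a forbidden $321$.

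The main obstacle is organizing the reverse inclusion cleanly: one must split the assumption $i\notin E_\pi$ into the deficiency case $\pi(i)<i$ and the fixed-point case $\pi(i)=i$, because the third point of the $321$ pattern is located differently (to the left via $\pi(i)$ in the first case, via $\pi(i+1)$ in the second). Checking the strict positional inequalities that guarantee a genuine length-$3$ decreasing subsequence is routine but essential; the most delicate of these is $i+1<c$, which I would derive from $c>d>i+1$. I expect no difficulty beyond this bookkeeping. One could alternatively phrase the whole argument in the language of symmetric non-nesting matchings introduced before Theorem~\ref{cara}, but the direct argument producing explicit $321$ patterns seems cleanest and most self-contained.
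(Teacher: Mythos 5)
Your proposal is correct and takes essentially the same approach as the paper: both arguments hinge on using the involution property to locate the third point of an explicit $321$ pattern ruling out descents when $i\notin E_\pi$ or $i+1\in E_\pi$, and both invoke centrosymmetry only at the boundary index $i=n$. The differences are purely organizational---the paper runs four direct cases on membership of $i,i+1$ in $E_\pi$, and its single pattern $(i{+}1)\,\pi(i)\,\pi(i{+}1)$ at positions $\pi(i+1)<i<i+1$ covers both of your subcases $\pi(i)<i$ and $\pi(i)=i$ at once.
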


\begin{proof} Let $i\in [n-1]$, and consider four possible cases: 
\begin{itemize}
\item If $i\in E_\pi$ and $i+1\notin E_\pi$, then $\pi(i)>i$ and $\pi(i+1)\leq i+1$, so $i\in\Des^+(\pi)$. 
\item If $i\notin E_\pi$ and $i+1\in E_\pi$, then $\pi(i)\leq i$ and $\pi(i+1)> i+1$, so $i\notin\Des^+(\pi)$.
\item If $\{i,i+1\}\subseteq E_\pi$, then $\pi(i)>i$ and $\pi(i+1)>i+1$. If $\pi(i)>\pi(i+1)$, then $\pi$ would have the $321$ pattern 
$\pi(i)\;\pi(i{+}1)\; i{+}1$. Hence, $i\notin \Des^+(\pi)$.  
\item If neither $i$ nor $i+1$ are excedance positions, then $\pi(i)\leq i$ and $\pi(i+1)\leq i+1$. 
If $\pi(i)>\pi(i+1)$, then $\pi$ would have the $321$ pattern $i{+}1\; \pi(i)\;\pi(i{+}1)$. Hence, $i\notin \Des^+(\pi)$.  
\end{itemize}

Consider now the case $i=n$, and note that $n+1\notin E_\pi$. Since $\pi$ is centrosymmetric, $\pi(n)+\pi(n+1)=2n+1$. Thus, the condition for
$n\in\Des^+(\pi)$, that is, $\pi(n)>\pi(n+1)$, is equivalent to $\pi(n)>n$, which is the condition for $n\in E_\pi$.
\end{proof}

Lemma \ref{conno} suggests the following extension of the notion of descent set, descent number, and major index to subsets of $[n]$:
\begin{align*}\Des(E)&=\{i|i\in E\land i+1\notin E\},\\
\des(E)&=|\Des(E)|,\\
\maj(E)&=\sum_{i\in \Des(E)}i.
\end{align*}
With this notation, $\Des^+(\pi)=\Des(E_\pi)$, and so $\des^+(\pi)=\des(E_{\pi})$ and $\maj^+(\pi)=\maj(E_{\pi})$.

Define the the polynomials
$$d_n(q)=\sum_{\pi\in \CI_{2n}(321)}q^{\des^+(\pi)},\qquad p_n(q)=\sum_{\pi\in \CI_{2n}(321)}q^{\maj^+(\pi)}.$$
To prove Theorems~\ref{altromodo} and~\ref{initial}, we will show that $d_n(q)$ and $p_n(q)$ satisfy the same recurrences as the respective expressions given in these two theorems.

\begin{lemma}\label{boh}
For $n\geq 2$, the polynomials $d_n(q)$ and $p_n(q)$ satisfy the following recurrences:
$$d_n(q)=2d_{n-1}(q)+(q-1)d_{n-2}(q),$$
$$p_n(q)=(1+q)p_{n-1}(q)+(q^n-q)p_{n-2}(q).$$
\end{lemma}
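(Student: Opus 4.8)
The plan is to transport both sums to the Boolean lattice $2^{[n]}$ using the bijection of Theorem~\ref{cara}. By that bijection, together with Lemma~\ref{conno} (which gives $\des^+(\pi)=\des(E_\pi)$ and $\maj^+(\pi)=\maj(E_\pi)$), the two polynomials become $d_n(q)=\sum_{E\subseteq[n]}q^{\des(E)}$ and $p_n(q)=\sum_{E\subseteq[n]}q^{\maj(E)}$, where $\des(E)$ and $\maj(E)$ are computed with the standing convention that $n+1\notin E$, so that $n\in E$ always forces a descent at position $n$. Both recurrences then follow by deleting the largest element $n$ and recording how the statistics change; the whole argument takes place entirely inside $2^{[n]}$.

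First I would split $2^{[n]}$ according to whether $n\in E$. If $n\notin E$, then $E\subseteq[n-1]$, and neither statistic distinguishes between being computed in $[n]$ or in $[n-1]$: the only position that could differ is $n-1$, whose descent status depends only on whether $n-1\in E$ and $n\notin E$, both already determined. Hence this block contributes exactly $d_{n-1}(q)$, respectively $p_{n-1}(q)$. If $n\in E$, I would write $E=E'\cup\{n\}$ with $E'\subseteq[n-1]$ and split again according to whether $n-1\in E'$, introducing the refined sums of $q^{\des(E')}$ and of $q^{\maj(E')}$ over the subsets $E'\subseteq[n-1]$ that do, respectively do not, contain $n-1$.

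The core of the argument is the bookkeeping of how $\des$ and $\maj$ change when $n$ is adjoined. When $n\in E$ there is always a descent at position $n$; meanwhile the descent at position $n-1$ is present in $E'$ exactly when $n-1\in E'$, but is destroyed in $E$ whenever $n-1\in E'$ (since now $n\in E$). For $\des$ this produces a net $+1$ when $n-1\notin E'$ and a net $0$ when $n-1\in E'$; for $\maj$ it adds $n$ when $n-1\notin E'$, and when $n-1\in E'$ it replaces a contribution of $n-1$ by a contribution of $n$, a net $+1$. Finally, the ``does not contain $n-1$'' sums collapse: a subset $E'\subseteq[n-1]$ with $n-1\notin E'$ is simply a subset of $[n-2]$ carrying the same statistic, so its generating polynomial is $d_{n-2}(q)$, respectively $p_{n-2}(q)$. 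Writing the complementary ``contains $n-1$'' sum as the full sum over $[n-1]$ minus this collapsed sum eliminates the auxiliaries and yields $d_n=2d_{n-1}+(q-1)d_{n-2}$ and $p_n=(1+q)p_{n-1}+(q^n-q)p_{n-2}$, as desired for $n\ge 2$.

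The main obstacle is precisely this boundary bookkeeping at positions $n-1$ and $n$: one must be scrupulous about the convention $n+1\notin E$ and about the fact that adjoining $n$ can simultaneously create a descent at position $n$ and annihilate one at position $n-1$. For $\maj$, this is exactly what distinguishes the coefficient $q^n$ (from the descent newly created at $n$ in the absence of $n-1$) from the coefficient $q$ (from the descent position merely shifting by one in the presence of $n-1$); pinning down these two weights correctly is the only delicate point, and everything else is routine resummation.
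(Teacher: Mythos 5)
Your proposal is correct and follows essentially the same route as the paper's own proof: both transfer the statistics to $2^{[n]}$ via Theorem~\ref{cara} and Lemma~\ref{conno}, split into the same three cases according to membership of $n$ and $n-1$, record the identical changes in $\des$ and $\maj$ (net $0$ or $+1$ for $\des$; $+n$ or $+1$ for $\maj$), and use the same resummation identifying subsets of $[n-1]$ that contain $n-1$ with the difference $d_{n-1}(q)-d_{n-2}(q)$, resp.\ $p_{n-1}(q)-p_{n-2}(q)$. The only cosmetic difference is directional: you adjoin $n$ to $E'\subseteq[n-1]$ where the paper deletes $n$ from $E\subseteq[n]$.
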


\begin{proof} Theorem~\ref{cara}  and Lemma \ref{conno} allow us to rewrite the two polynomials $d_n(q)$ and $p_n(q)$ as follows:
$$d_n(q)=\sum_{E\subseteq [n]}q^{\des(E)},\qquad p_n(q)=\sum_{E\subseteq [n]}q^{\maj(E)}.$$
Consider now a subset $E\subseteq [n]$. There are three possible cases: 
\begin{itemize}
\item If $n\notin E$, then $E$ is a subset of $[n-1]$ with the same descent set and major index.
\item If $n\in E$ and $n-1\notin E$, then removing $n$ from $E$ yields a subset $E'$ of $[n-2]$ with $\Des(E')=\Des(E)\setminus \{n\}$ and $\maj(E')=\maj(E)-n$. 
\item If $n,n-1\in E$, then removing $n$ from $E$ yields a subset $E'\subseteq[n-1]$ with $\Des(E')=\Des(E)\setminus \{n\}\cup \{n-1\}$ and $\maj(E')=\maj(E)-1$. Since $E'$ contains $n-1$, the subsets $E'$ obtained in this way are those subsets of $[n-1]$ which are not a subsets of $[n-2]$.
\end{itemize}
These considerations give
$$d_n(q)=d_{n-1}(q)+qd_{n-2}(q)+(d_{n-1}(q)-d_{n-2}(q)),$$
$$p_n(q)=p_{n-1}(q)+q^np_{n-2}(q)+q(p_{n-1}(q)-p_{n-2}(q)),$$
which are equivalent to the above recurrences.
\end{proof}

 Theorem \ref{altromodo} follows now from Lemma \ref{boh}. Indeed, a routine computation shows that the polynomial 
$$\sum_{k\geq 0} {n+1 \choose 2k}q^k$$
satisfies the same recurrence as $d_n(q)$ with the same initial conditions.

 In order to prove Theorem \ref{initial}, we need a few more definitions. Let $R_{a,b}$ denote an $a\times b$ rectangle, and let $\Y_{a,b}$ denote the set of Young diagrams that fit inside of $R_{a,b}$ (where the convention is that diagrams are placed touching the upper and left sides of the rectangle). 
By considering the southeast boundary of the Young diagram, we identify $\Y_{a,b}$ with the set of paths with steps $N$ and $E$ from $(0,0)$ to $(b,a)$. It is well known that
$|\Y_{a,b}|=\binom{a+b}{a}$. 

 For every positive integer $n$, we can represent the subsets of $[n]$ as lattice paths with unit steps $N$ and $E$ starting at the origin, where the $i$th step is $N$ if and only if $i$ belongs to the subset. Let $\A_n$ denote the set of all lattice paths with $n$ steps $N$ and $E$ starting at the origin. Note that $|\A_n|=2^n$ and
$$\A_n=\bigcup_{a+b=n} \Y_{a,b}.$$

 Consider now the set $\Y^E_{a,b}$ of paths with steps $N$ and $E$ from $(0,0)$ to $(b+1,a)$ starting with an $E$ step. The set $\Y^E_{a,b}$  is obviously in bijection with $\Y_{a,b}$.
Define
$$\A^E_n=\bigcup_{a+b=n} \Y
^E_{a,b}$$
and
\begin{equation}\label{def:r} r_n(q)=\sum_{P\in \A^E_n}q^{\area(P)},\end{equation}
where $\area(P)$ is the area of the Young diagram corresponding to $P$. Then,
$$r_n(q)=\sum_{a=0}^n\sum_{P\in \Y^E_{a,n-a}}q^{\area(P)}=\sum_{a=0}^nq^{n-a}\sum_{\bar{P}\in \Y_{a,n-a}}q^{\area(\bar{P})},$$
where $\bar{P}$ is the path obtained from $P$ by deleting the first step.
This last espression equals $$\sum_{a=0}^{n} q^{n-a}{n \choose a}_q,$$
using the well-known fact~\cite{Sta} that the coefficient of $q^r$ in the q-binomial coefficient ${n\choose a}_q$ equals the number of partitions of $r$ whose Young diagram is in $\Y_{a,n-a}$.

To prove Theorem \ref{initial}, it is now enough to show is that the polynomials $r_n(q)$ satisfy the same recurrence as the one given in Lemma~\ref{boh} for $p_n(q)$. It is clear that their initial values coincide.

\begin{prop}\label{prop:recr}
For $n\geq 2$, the polynomials $r_n(q)$ defined in Equation~\eqref{def:r} satisfy the recurrence
$$r_n(q)=(1+q)r_{n-1}(q)+(q^n-q)r_{n-2}(q).$$
\end{prop}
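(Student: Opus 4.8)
The plan is to establish the recurrence for $r_n(q)$ directly by partitioning the set $\A^E_n$ according to the structure of the path near its endpoint, mirroring the case analysis used for $p_n(q)$ in Lemma~\ref{boh}. Recall that $\A^E_n=\bigcup_{a+b=n}\Y^E_{a,b}$ consists of $N,E$-paths from $(0,0)$ to $(b+1,a)$ with $a+b=n$ that begin with an $E$ step, and $\area(P)$ is the area of the associated Young diagram. Each such path has $n+1$ steps total (the forced initial $E$ plus $n$ further steps), so I would classify paths by their final step and, when needed, their second-to-last step, peeling these off to obtain a path in $\A^E_{n-1}$ or $\A^E_{n-2}$ and tracking how the area changes.

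The key steps, in order, are as follows. First, I would split $\A^E_n$ according to whether the last step is $N$ or $E$. If the last step is $E$, deleting it gives a path $P'$ ending one column to the left, landing in $\A^E_{n-1}$, and the deleted $E$ step lies strictly below the diagram (it adds no new cells to the left of the boundary in the sense that matters), so $\area(P)=\area(P')$; this contributes $r_{n-1}(q)$. Second, if the last step is $N$, I must look at the preceding step. I would further subdivide: if the second-to-last step is also $N$, or if it is $E$, deleting the terminal $N$ together with the appropriate neighbor yields the remaining two summands. The subcase producing the factor $q^n$ corresponds to a terminal $N$ step that sits in the topmost row and thereby contributes a full row of $n$ cells to the area, so deleting it and the step forcing that row drops the area by $n$ and lands the path in $\A^E_{n-2}$, giving $q^n r_{n-2}(q)$; the remaining subcase contributes $q\bigl(r_{n-1}(q)-r_{n-2}(q)\bigr)$, exactly paralleling the $n,n-1\in E$ case of Lemma~\ref{boh} where the area increases by $1$ and one excludes the paths already counted. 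Summing these pieces gives $r_n(q)=r_{n-1}(q)+q^n r_{n-2}(q)+q\bigl(r_{n-1}(q)-r_{n-2}(q)\bigr)$, which rearranges to the claimed recurrence.

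An alternative, and perhaps cleaner, route would be to avoid the geometric peeling entirely and instead invoke the bijection $\pi\mapsto E_\pi$ of Theorem~\ref{cara} together with Lemma~\ref{conno}, observing that $p_n(q)=\sum_{E\subseteq[n]}q^{\maj(E)}$, and then show directly that the map sending a subset $E\subseteq[n]$ to its $N/E$-path representation, followed by the prepend-an-$E$-step identification into $\A^E_n$, sends $\maj(E)$ to $\area$ of the corresponding element of $\A^E_n$. If that statistic-preserving correspondence holds, then $r_n(q)=p_n(q)$ as polynomials and the recurrence is inherited verbatim from Lemma~\ref{boh}, making Proposition~\ref{prop:recr} immediate. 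I would check this correspondence on small cases first to decide which approach to commit to.

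The main obstacle I anticipate is bookkeeping the area change correctly in the terminal-$N$ case, specifically pinning down exactly when the deleted top row contributes a full $n$ cells versus a single cell, and confirming that the ``minus $r_{n-2}(q)$'' correction precisely removes the overcounted paths (those already captured by the $q^n r_{n-2}$ term). This is the same delicate point as the third bullet in the proof of Lemma~\ref{boh}, where one must argue that the subsets $E'$ arising from $n,n-1\in E$ are exactly those subsets of $[n-1]$ that are not subsets of $[n-2]$; translating that inclusion-exclusion faithfully into the lattice-path language — and keeping the forced initial $E$ step consistent throughout the deletions — is where I would spend the most care.
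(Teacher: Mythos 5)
Your main route breaks down exactly at the point you flagged as delicate, and it cannot be repaired by more careful bookkeeping: a decomposition localized at the \emph{end} of the path does not produce the recurrence. A path $P\in\Y^E_{a,b}$ ends at $(b+1,a)$ with $a+b=n$, so when the last step is $N$, the full top row it closes off has $b+1=n-a+1$ cells, not $n$ cells; your claim that the terminal $N$ ``contributes a full row of $n$ cells'' holds only for $a=1$. Consequently, deleting the terminal $N$ together with its neighbor (whether the path ends $NN$ or $EN$) decreases the area by an amount that depends on $a$, and after summing over all $a$ with $a+b=n$ no uniform factor $q^n$ can be extracted. The paper's proof avoids this by splitting terminal-$N$ paths according to their \emph{first two} steps rather than their last two. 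If $P$ ends with $N$ and begins $EE$, one removes the \emph{first} and the \emph{last} step: the terminal $N$ removes the top row of $b+1$ cells, and removing the initial $E$ shifts the remaining $a-1$ rows one column left, removing $a-1$ further cells, for a total decrease of $(b+1)+(a-1)=n$ uniformly in $a$; this yields $q^n r_{n-2}(q)$. If $P$ ends with $N$ and begins $EN$, one removes the \emph{second} step: that $N$ sits at abscissa $1$ and carries exactly one cell, so the area drops by $1$ and the image is a path in $\A^E_{n-1}$ still ending in $N$, giving $q\bigl(r_{n-1}(q)-r_{n-2}(q)\bigr)$ (the subtraction because paths in $\A^E_{n-1}$ ending in $E$ correspond to $\A^E_{n-2}$, by the same removal as in the first case). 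Your first case (last step $E$, area unchanged, contributing $r_{n-1}(q)$) is correct and agrees with the paper.

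Your proposed fallback route also fails, and the small-case check you wisely planned would have caught it at $n=3$: the map sending $E\subseteq[n]$ to its $N/E$-path with an $E$ step prepended does \emph{not} send $\maj(E)$ to $\area$. For $E=\{1,3\}$ one has $\maj(E)=1+3=4$, while the corresponding path $ENEN$ has $\area=1+2=3$. The statistic $\maj(E)$ is a sum of peak positions, and $\area$ is a sum of hook lengths in disguise; these are equidistributed over $\A^E_n$ but not equal pathwise. Converting one into the other requires the nontrivial bijection $\bijhd$ of Lemma~\ref{lem:hdpeak}, which is precisely the content of the paper's separate bijective proof of Theorem~\ref{initial} in Section~\ref{sec:bijective}; within Section~\ref{sec:recursive}, the identification $p_n(q)=r_n(q)$ is obtained only by verifying that both satisfy the recurrence of Lemma~\ref{boh} with the same initial values, which is why Proposition~\ref{prop:recr} must be proved by the area-preserving peeling argument above rather than inherited from a statistic-preserving map.
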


\begin{proof} Consider a path $P\in \A^E_n$. Once again, there are three possible cases, as illustrated in Figure~\ref{fig:r}:
\begin{itemize}
\item If $P$ ends with an $E$ step, remove it, getting an element of $\A^E_{n-1}$ with the same area.
\item If $P$ ends with an $N$ step and begins with $EE$, remove the first and last step from $P$, getting an element of $\A^E_{n-2}$ whose area has decreased by $n$. 
\item If $P$ ends with an $N$ step and begins with $EN$, remove the second step in $P$, getting an element of $\A^E_{n-1}$ whose area has decreased by $1$. The resulting path is not an arbitrary element of $\A^E_{n-1}$, but one ending with an $N$ step. Such paths contribute $r_{n-1}(q)-r_{n-2}(q)$ to the generating polynomial, since paths in $\A^E_{n-1}$ ending with an $E$ step are equivalent to paths in $\A^E_{n-2}$.
\end{itemize}
These considerations give
$$r_n(q)=r_{n-1}(q)+q^nr_{n-2}(q)+q(r_{n-1}(q)-r_{n-2}(q)),$$
which is equivalent to the stated recurrence.
\end{proof}

\begin{figure}[htb]
\centering
\definecolor{ffqqqq}{rgb}{1.,0.,0.}
\definecolor{qqqqff}{rgb}{0.,0.,1.}
\definecolor{ffffqq}{rgb}{1.,1.,0.}
\begin{tikzpicture}[line cap=round,line join=round,>=triangle 45,x=4mm,y=4mm]
\fill[color=ffffqq,fill=ffffqq,fill opacity=0.12] (0.,1) -- (5.,1) -- (5.,5.976002603987413) -- (0.,5.976002603987413) -- cycle;
\fill[color=ffffqq,fill=ffffqq,fill opacity=0.12] (9.,1.) -- (13.,1.) -- (13.,6.) -- (9.,6.) -- cycle;
\fill[color=ffqqqq,fill=ffqqqq,fill opacity=0.1] (0.,0.9760026039874132) -- (1.,0.9760026039874132) -- (1.,4.) -- (3.,4.) -- (3.,5.976002603987413) -- (0.,5.976002603987413) -- cycle;
\fill[color=ffqqqq,fill=ffqqqq,fill opacity=0.1] (9.,1.) -- (10.,1.) -- (10.,4.) -- (12.,4.) -- (12.,6.) -- (9.,6.) -- cycle;
\fill[color=ffffqq,fill=ffffqq,fill opacity=0.12] (0.,-8.) -- (5.,-8.) -- (5.,-2.) -- (0.,-2.) -- cycle;
\fill[color=ffqqqq,fill=ffqqqq,fill opacity=0.1] (0.,-8.) -- (2.,-8.) -- (2.,-4.) -- (5.,-4.) -- (5.,-2.) -- (0.,-2.) -- cycle;
\fill[color=ffffqq,fill=ffffqq,fill opacity=0.12] (10.,-8.) -- (14.,-8.) -- (14.,-3.) -- (10.,-3.) -- cycle;
\fill[color=ffqqqq,fill=ffqqqq,fill opacity=0.1] (10.,-8.) -- (11.,-8.) -- (11.,-4.) -- (14.,-4.) -- (14.,-3.) -- (10.,-3.) -- cycle;
\fill[color=ffffqq,fill=ffffqq,fill opacity=0.12] (0.,-16.) -- (5.,-16.) -- (5.,-10.) -- (0.,-10.) -- cycle;
\fill[color=ffqqqq,fill=ffqqqq,fill opacity=0.1] (0.,-16.) -- (1.,-16.) -- (1.,-13.) -- (3.,-13.) -- (3.,-12.) -- (5.,-12.) -- (5.,-10.) -- (0.,-10.) -- cycle;
\fill[color=ffffqq,fill=ffffqq,fill opacity=0.12] (8.990734685700113,-15.008306596296148) -- (13.990734685700113,-15.008306596296148) -- (14.,-10.) -- (9.,-10.) -- cycle;
\fill[color=ffqqqq,fill=ffqqqq,fill opacity=0.1] (8.990761988121974,-14.993548446158929) -- (9.990734685700113,-15.008306596296148) -- (10.,-13.) -- (12.,-13.) -- (12.,-12.) -- (14.,-12.) -- (14.,-10.) -- (9.,-10.) -- cycle;
\draw [line width=2.8pt,color=ffffqq] (0.,1)-- (5.,1);
\draw [line width=2.8pt,color=ffffqq] (5.,1)-- (5.,5.976002603987413);
\draw [line width=2.8pt,color=ffffqq] (5.,5.976002603987413)-- (0.,5.976002603987413);
\draw [line width=2.8pt,color=ffffqq] (0.,5.976002603987413)-- (0.,1);
\draw [line width=2.8pt,color=qqqqff] (0.,0.9760026039874132)-- (1.,0.9760026039874132);
\draw [line width=2.8pt,color=qqqqff] (1.,0.9760026039874132)-- (1.,2.);
\draw [line width=2.8pt,color=qqqqff] (1.,2.)-- (1.,3.);
\draw [line width=2.8pt,color=qqqqff] (1.,3.)-- (1.,4.);
\draw [line width=2.8pt,color=qqqqff] (1.,4.)-- (2.,4.);
\draw [line width=2.8pt,color=qqqqff] (2.,4.)-- (3.,4.);
\draw [line width=2.8pt,color=qqqqff] (3.,4.)-- (3.,5.);
\draw [line width=2.8pt,color=qqqqff] (3.,5.)-- (3.,5.976002603987413);
\draw [line width=2.8pt,color=qqqqff] (3.,5.976002603987413)-- (4.,5.976002603987413);
\draw [line width=2.8pt,color=qqqqff] (4.,5.976002603987413)-- (5.,5.976002603987413);
\draw [line width=2.8pt,color=ffffqq] (9.,1.)-- (13.,1.);
\draw [line width=2.8pt,color=ffffqq] (13.,1.)-- (13.,6.);
\draw [line width=2.8pt,color=ffffqq] (13.,6.)-- (9.,6.);
\draw [line width=2.8pt,color=ffffqq] (9.,6.)-- (9.,1.);
\draw [line width=2.8pt,color=qqqqff] (9.,1.)-- (10.,1.);
\draw [line width=2.8pt,color=qqqqff] (10.,1.)-- (10.,2.);
\draw [line width=2.8pt,color=qqqqff] (10.,2.)-- (10.,3.);
\draw [line width=2.8pt,color=qqqqff] (10.,3.)-- (10.,4.);
\draw [line width=2.8pt,color=qqqqff] (10.,4.)-- (11.,4.);
\draw [line width=2.8pt,color=qqqqff] (11.,4.)-- (12.,4.);
\draw [line width=2.8pt,color=qqqqff] (12.,4.)-- (12.,5.);
\draw [line width=2.8pt,color=qqqqff] (12.,5.)-- (12.,6.);
\draw [line width=2.8pt,color=qqqqff] (12.,6.)-- (13.,6.);
\draw [color=ffqqqq] (0.,0.9760026039874132)-- (1.,0.9760026039874132);
\draw [color=ffqqqq] (1.,0.9760026039874132)-- (1.,4.);
\draw [color=ffqqqq] (1.,4.)-- (3.,4.);
\draw [color=ffqqqq] (3.,4.)-- (3.,5.976002603987413);
\draw [color=ffqqqq] (3.,5.976002603987413)-- (0.,5.976002603987413);
\draw [color=ffqqqq] (0.,5.976002603987413)-- (0.,0.9760026039874132);
\draw [color=ffqqqq] (9.,1.)-- (10.,1.);
\draw [color=ffqqqq] (10.,1.)-- (10.,4.);
\draw [color=ffqqqq] (10.,4.)-- (12.,4.);
\draw [color=ffqqqq] (12.,4.)-- (12.,6.);
\draw [color=ffqqqq] (12.,6.)-- (9.,6.);
\draw [color=ffqqqq] (9.,6.)-- (9.,1.);
\draw [->,line width=0.4pt] (6.203831553619772,2.9891957286874513) -- (7.703831553619772,2.9891957286874513);
\draw [line width=2.8pt,color=ffffqq] (0.,-8.)-- (5.,-8.);
\draw [line width=2.8pt,color=ffffqq] (5.,-8.)-- (5.,-2.);
\draw [line width=2.8pt,color=ffffqq] (5.,-2.)-- (0.,-2.);
\draw [line width=2.8pt,color=ffffqq] (0.,-2.)-- (0.,-8.);
\draw [line width=2.8pt,color=qqqqff] (0.,-8.)-- (0.,-8.);
\draw [line width=2.8pt,color=qqqqff] (0.,-8.)-- (2.,-8.);
\draw [line width=2.8pt,color=qqqqff] (2.,-8.)-- (2.,-6.);
\draw [line width=2.8pt,color=qqqqff] (2.,-6.)-- (2.,-5.);
\draw [line width=2.8pt,color=qqqqff] (2.,-5.)-- (2.,-4.);
\draw [line width=2.8pt,color=qqqqff] (2.,-4.)-- (2.,-4.);
\draw [line width=2.8pt,color=qqqqff] (2.,-4.)-- (5.,-4.);
\draw [color=ffqqqq] (0.,-8.)-- (2.,-8.);
\draw [color=ffqqqq] (2.,-8.)-- (2.,-4.);
\draw [color=ffqqqq] (2.,-4.)-- (5.,-4.);
\draw [color=ffqqqq] (5.,-4.)-- (5.,-2.);
\draw [color=ffqqqq] (5.,-2.)-- (0.,-2.);
\draw [color=ffqqqq] (0.,-2.)-- (0.,-8.);
\draw [->,line width=0.4pt] (6.203831553619769,-4.986806875299962) -- (7.703831553619761,-4.986806875299962);
\draw [->,line width=0.4pt] (6.203831553619768,-12.986806875299962) -- (7.70383155361977,-12.986806875299962);
\draw [line width=2.8pt,color=qqqqff] (5.,-4.)-- (4.,-4.);
\draw [line width=2.8pt,color=qqqqff] (4.,-4.)-- (5.,-4.);
\draw [line width=2.8pt,color=qqqqff] (5.,-4.)-- (5.,-3.);
\draw [line width=2.8pt,color=qqqqff] (5.,-3.)-- (5.,-2.);
\draw [line width=2.8pt,color=qqqqff] (2.,-4.)-- (5.,-4.);
\draw [line width=2.8pt,color=ffffqq] (10.,-8.)-- (14.,-8.);
\draw [line width=2.8pt,color=ffffqq] (14.,-8.)-- (14.,-3.);
\draw [line width=2.8pt,color=ffffqq] (14.,-3.)-- (10.,-3.);
\draw [line width=2.8pt,color=ffffqq] (10.,-3.)-- (10.,-8.);
\draw [line width=2.8pt,color=qqqqff] (10.,-8.)-- (10.,-8.);
\draw [line width=2.8pt,color=qqqqff] (11.,-8.)-- (11.,-6.);
\draw [line width=2.8pt,color=qqqqff] (11.,-6.)-- (11.,-5.);
\draw [line width=2.8pt,color=qqqqff] (11.,-5.)-- (11.,-4.);
\draw [line width=2.8pt,color=qqqqff] (11.,-4.)-- (11.,-4.);
\draw [line width=2.8pt,color=qqqqff] (11.,-4.)-- (14.,-4.);
\draw [color=ffqqqq] (10.,-8.)-- (11.,-8.);
\draw [color=ffqqqq] (11.,-8.)-- (11.,-4.);
\draw [color=ffqqqq] (11.,-4.)-- (14.,-4.);
\draw [color=ffqqqq] (14.,-4.)-- (14.,-3.);
\draw [color=ffqqqq] (14.,-3.)-- (10.,-3.);
\draw [color=ffqqqq] (10.,-3.)-- (10.,-8.);
\draw [line width=2.8pt,color=qqqqff] (14.,-4.)-- (13.,-4.);
\draw [line width=2.8pt,color=qqqqff] (13.,-4.)-- (14.,-4.);
\draw [line width=2.8pt,color=qqqqff] (14.,-4.)-- (14.,-3.);
\draw [line width=2.8pt,color=qqqqff] (11.,-4.)-- (14.,-4.);
\draw [line width=2.8pt,color=ffffqq] (0.,-16.)-- (5.,-16.);
\draw [line width=2.8pt,color=ffffqq] (5.,-16.)-- (5.,-10.);
\draw [line width=2.8pt,color=ffffqq] (5.,-10.)-- (0.,-10.);
\draw [line width=2.8pt,color=ffffqq] (0.,-10.)-- (0.,-16.);
\draw [line width=2.8pt,color=qqqqff] (0.,-16.)-- (0.,-16.);
\draw [line width=2.8pt,color=qqqqff] (0.,-16.)-- (1.,-16.);
\draw [line width=2.8pt,color=qqqqff] (1.,-16.)-- (1.,-14.);
\draw [line width=2.8pt,color=qqqqff] (1.,-14.)-- (1.,-13.);
\draw [line width=2.8pt,color=qqqqff] (1.,-13.)-- (1.,-13.);
\draw [line width=2.8pt,color=qqqqff] (1.,-13.)-- (3.,-13.);
\draw [line width=2.8pt,color=qqqqff] (5.,-12.)-- (5.,-11.);
\draw [line width=2.8pt,color=qqqqff] (5.,-11.)-- (5.,-10.);
\draw [line width=2.8pt,color=qqqqff] (3.,-13.)-- (3.,-12.);
\draw [line width=2.8pt,color=qqqqff] (3.,-12.)-- (5.,-12.);
\draw [color=ffqqqq] (0.,-16.)-- (1.,-16.);
\draw [color=ffqqqq] (1.,-16.)-- (1.,-13.);
\draw [color=ffqqqq] (1.,-13.)-- (3.,-13.);
\draw [color=ffqqqq] (3.,-13.)-- (3.,-12.);
\draw [color=ffqqqq] (3.,-12.)-- (5.,-12.);
\draw [color=ffqqqq] (5.,-12.)-- (5.,-10.);
\draw [color=ffqqqq] (5.,-10.)-- (0.,-10.);
\draw [color=ffqqqq] (0.,-10.)-- (0.,-16.);
\draw [line width=2.8pt,color=qqqqff] (10.,-14.)-- (10.,-13.);
\draw [line width=2.8pt,color=qqqqff] (10.,-13.)-- (10.,-13.);
\draw [line width=2.8pt,color=qqqqff] (10.,-13.)-- (12.,-13.);
\draw [line width=2.8pt,color=qqqqff] (14.,-12.)-- (14.,-11.);
\draw [line width=2.8pt,color=qqqqff] (14.,-11.)-- (14.,-10.);
\draw [line width=2.8pt,color=qqqqff] (12.,-13.)-- (12.,-12.);
\draw [line width=2.8pt,color=qqqqff] (12.,-12.)-- (14.,-12.);
\draw [line width=2.8pt,color=qqqqff] (10.,-15.)-- (10.,-14.);
\draw [line width=2.8pt,color=ffffqq] (8.990734685700113,-15.008306596296148)-- (13.990734685700113,-15.008306596296148);
\draw [line width=2.8pt,color=ffffqq] (13.990734685700113,-15.008306596296148)-- (14.,-10.);
\draw [line width=2.8pt,color=ffffqq] (14.,-10.)-- (9.,-10.);
\draw [line width=2.8pt,color=ffffqq] (9.,-10.)-- (8.990734685700113,-15.008306596296148);
\draw [line width=2.8pt,color=qqqqff] (8.990761988121974,-14.993548446158929)-- (9.990734685700113,-15.008306596296148);
\draw [line width=2.8pt,color=qqqqff] (14.,-12.)-- (14.,-11.);
\draw [line width=2.8pt,color=qqqqff] (14.,-11.)-- (14.,-10.);
\draw [color=ffqqqq] (8.990761988121974,-14.993548446158929)-- (9.990734685700113,-15.008306596296148);
\draw [color=ffqqqq] (9.990734685700113,-15.008306596296148)-- (10.,-13.);
\draw [color=ffqqqq] (10.,-13.)-- (12.,-13.);
\draw [color=ffqqqq] (12.,-13.)-- (12.,-12.);
\draw [color=ffqqqq] (12.,-12.)-- (14.,-12.);
\draw [color=ffqqqq] (14.,-12.)-- (14.,-10.);
\draw [color=ffqqqq] (14.,-10.)-- (9.,-10.);
\draw [color=ffqqqq] (9.,-10.)-- (8.990761988121974,-14.993548446158929);
\draw [line width=2.8pt,color=qqqqff] (10.000027302421861,-7.985241849862781)-- (11.,-8.);
\draw (1.,4.)-- (1.,5.976002603987413);
\draw (2.,4.)-- (2.,6.);
\draw (3.,5.)-- (0.,5.);
\draw (1.,4.)-- (0.,4.);
\draw (1.,3.)-- (0.,3.);
\draw (1.,2.)-- (0.,2.);
\draw (12.,5.)-- (9.,5.);
\draw (10.,4.)-- (9.,4.);
\draw (10.,3.)-- (9.,3.);
\draw (10.,2.)-- (9.,2.);
\draw (11.,-4.)-- (10.,-4.);
\draw (11.,-5.)-- (10.,-5.);
\draw (11.,-6.)-- (10.,-6.);
\draw (11.,-7.)-- (10.,-7.);
\draw (1.,-14.)-- (0.,-14.);
\draw (1.,-15.)-- (0.,-15.);
\draw (10.,-14.)-- (9.,-14.);
\draw (1.,-13.)-- (0.,-13.);
\draw (10.,-13.)-- (9.,-13.);
\draw (10.,4.)-- (10.,5.976002603987414);
\draw (11.,4.)-- (11.,5.976002603987414);
\draw (4.,-4.)-- (4.,-2.0239973960125868);
\draw (3.,-4.)-- (3.,-2.0239973960125868);
\draw (2.,-4.)-- (2.,-2.0239973960125868);
\draw (4.,-12.)-- (4.,-10.023997396012586);
\draw (3.,-12.)-- (3.,-10.023997396012586);
\draw (12.,-12.)-- (12.,-10.023997396012586);
\draw (13.,-12.)-- (13.,-10.023997396012586);
\draw (1.,-8.)-- (1.,-2.);
\draw (5.,-3.)-- (0.,-3.);
\draw (2.,-4.)-- (0.,-4.);
\draw (2.,-5.)-- (0.,-5.);
\draw (2.,-6.)-- (0.,-6.);
\draw (2.,-7.)-- (0.,-7.);
\draw (11.,-4.)-- (11.,-3.);
\draw (12.,-4.)-- (12.,-3.);
\draw (13.,-4.)-- (13.,-3.);
\draw (1.,-13.)-- (1.,-10.);
\draw (2.,-13.)-- (2.,-10.);
\draw (10.,-13.)-- (10.,-10.);
\draw (11.,-13.)-- (11.,-10.);
\draw (5.,-11.)-- (0.,-11.);
\draw (14.,-11.)-- (9.,-11.);
\draw (3.,-12.)-- (0.,-12.);
\draw (12.,-12.)-- (9.,-12.);
\end{tikzpicture}
\caption{The three cases in the proof of Proposition~\ref{prop:recr}.}
\label{fig:r}
\end{figure}
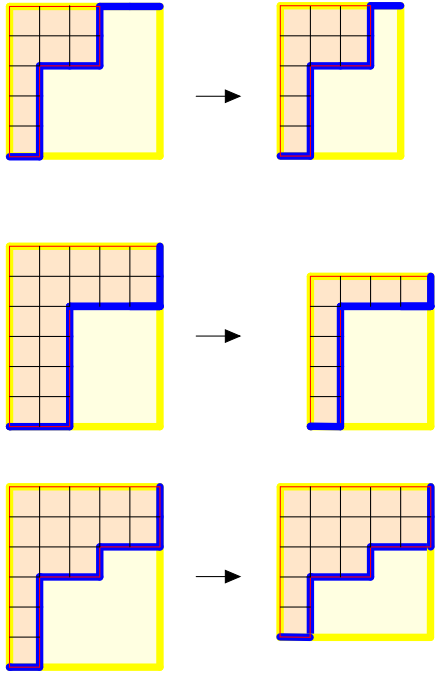

\subsection{Proof of Theorem~\ref{desdistrib}}\label{sec:proofdes}

To obtain the generating polynomial for the statistic $\des$ on $\CI_{2n}(321)$, we will combine Lemma \ref{conno} with the following simple fact. 
\begin{lemma}\label{des} Let $\pi \in \CI_{2n}(321)$ and recall the definition of $E_{\pi}$ from Equation~\eqref{def:Epi}. Then,
$$\des(\pi)=\begin{cases} 2\des^+(\pi)-1=2\des(E_{\pi})-1  & \mbox{if $n\in E_{\pi}$,} \\  2\des^+(\pi)=2\des(E_{\pi})   & \mbox{otherwise.} \end{cases}$$. 

\end{lemma}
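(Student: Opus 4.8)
The plan is to compute $\des(\pi)=|\Des(\pi)|$ directly from the decomposition of $\Des(\pi)$ recorded in Equation~\eqref{eq:DesDes+}, keeping careful track of the single position at which the two pieces can overlap. For $m=2n$ that equation reads
$$\Des(\pi)=\Des^+(\pi)\cup\{2n-i:i\in\Des^+(\pi)\},$$
and by Lemma~\ref{conno} we have $\Des^+(\pi)=\Des(E_\pi)$, so that $\des^+(\pi)=\des(E_\pi)$; thus it suffices to express $|\Des(\pi)|$ in terms of $\des^+(\pi)$ together with a single parity correction.

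First I would locate the two constituent sets inside $\{1,\dots,2n-1\}$. Since $\Des^+(\pi)\subseteq[n]$, each of its elements $i$ satisfies $1\le i\le n$, so its mirror image $2n-i$ lies in $\{n,n+1,\dots,2n-1\}$. Hence the reflected set $\{2n-i:i\in\Des^+(\pi)\}$ is contained in $\{n,\dots,2n-1\}$, and since $[n]\cap\{n,\dots,2n-1\}=\{n\}$, the two sets can share only the value $n$. Moreover $2n-i=n$ forces $i=n$, so $n$ belongs to the reflected set exactly when $n\in\Des^+(\pi)$; the two sets therefore intersect precisely when $n\in\Des^+(\pi)$, in which case the intersection is $\{n\}$, and are disjoint otherwise. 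Because $i\mapsto 2n-i$ is injective, each set has cardinality $\des^+(\pi)$, and inclusion–exclusion yields
$$\des(\pi)=2\des^+(\pi)-[\,n\in\Des^+(\pi)\,],$$
where the bracket is $1$ when $n\in\Des^+(\pi)$ and $0$ otherwise.

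To finish, I would rewrite the condition $n\in\Des^+(\pi)$ as a condition on $E_\pi$. Since $E_\pi=\Exc(\pi)\cap[n]\subseteq[n]$, we always have $n+1\notin E_\pi$, so the criterion of Lemma~\ref{conno} at $i=n$ collapses to the equivalence $n\in\Des^+(\pi)\iff n\in E_\pi$ (this is exactly the final case treated in the proof of that lemma, where centrosymmetry gives $\pi(n)+\pi(n+1)=2n+1$). Substituting this equivalence and $\des^+(\pi)=\des(E_\pi)$ into the displayed identity produces the two cases of the statement: $\des(\pi)=2\des(E_\pi)-1$ when $n\in E_\pi$, and $\des(\pi)=2\des(E_\pi)$ otherwise.

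The argument is essentially bookkeeping, and the only step requiring genuine care is the overlap analysis: recognizing that the reflection $i\mapsto 2n-i$ carries $[n]$ onto $\{n,\dots,2n-1\}$, so that $n$ is the \emph{unique} position where $\Des^+(\pi)$ and its mirror image can coincide, and that this coincidence is governed precisely by whether $n\in E_\pi$. Once that is pinned down, no genuinely difficult step remains.
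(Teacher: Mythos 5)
Your proof is correct and follows the same route as the paper's own (much terser) argument: the paper's proof simply cites Equation~\eqref{eq:DesDes+} and notes that the union is disjoint unless $n\in\Des^+(\pi)$, leaving implicit the inclusion--exclusion bookkeeping and the equivalence $n\in\Des^+(\pi)\iff n\in E_\pi$ from the $i=n$ case of Lemma~\ref{conno}, both of which you spell out explicitly and correctly.
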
   
\begin{proof} By Equation~\eqref{eq:DesDes+},
$\Des(\pi)=\Des^+(\pi)\cup\{2n-i:i\in \Des^+(\pi)\}$
and this union is disjoint unless  $n\in \Des^+(\pi)$.
\end{proof}

\begin{proof}[Proof of Theorem~\ref{desdistrib}]
By Lemma \ref{conno}, involutions in $\CI_{2n}(321)$ are in bijection with subsets of $[n]$.
Let $\pi\in\CI_{2n-2}(321)$ and let $E=E_\pi\subseteq[n-1]$ be the corresponding subset. We build two involutions
in $\CI_{2n}(321)$ as follows: let $\tilde{\pi}$ be the involution corresponding to $E$, seen as a subset of $[n]$; and let $\hat{\pi}$ be the involution corresponding to the set $E\cup \{n\}$. Setting $t=\des(E)$ and applying Lemma \ref{des}, we have
\begin{itemize}
\item If $n-1 \in E$, then 
$$ \des(\pi)=2t-1, \quad \quad \des(\tilde{\pi})=2t, \quad \quad \des(\hat{\pi})=2t-1.$$
\item If $n-1 \notin E$, then 
$$ \des(\pi)=2t, \quad \quad \des(\tilde{\pi})=2t, \quad \quad \des(\hat{\pi})=2t+1.$$
\end{itemize}
This implies that
$$\sum_{\pi\in \CI_{2n}(321)}q^{\des(\pi)}=(1+q)\sum_{\pi\in \CI_{2n-2}(321)}q^{\des(\pi)}.$$
Since $\sum_{\pi\in \CI_{2}(321)}q^{\des(\pi)}=1+q$, we get formula by induction.
\end{proof}

\section{Bijective proofs of Theorems~\ref{altromodo} and \ref{initial}}\label{sec:bijective}

 In this section we show that Theorems~\ref{altromodo} and \ref{initial} can be also proved bijectively. These bijective proofs are based upon a more careful analysis of the connections between lattice paths and integer partitions.

 Define a {\em peak} of a path in $\A_n$ to be an occurrence of $NE$, or equivalently the vertex in the middle of such an occurrence.
If we label the vertices of a path $P\in\A_n$ from $0$ to $n$ starting at the origin, the {\em peak set} of $P$, denoted $\Peak(P)$, is the set of labels of the vertices that are peaks in $P$. Let $\Peak^*(P)=\Peak(PE)$, where $PE$ is the path obtained from $P$ by appending a step $E$ at the end. Note that 
$$\Peak^*(P)=\begin{cases} \Peak(P)\cup\{n\} & \mbox{if $P$ ends with an $N$ step,} \\ \Peak(P) & \mbox{otherwise}.\end{cases}$$

Given a $P\in\A_n$, and thinking of it as a the south east edge of a Young diagram $\lambda$, we define its \emph{hook decomposition} $\hd(P)=\{i_1,i_2,\dots,i_k\}$ as follows.
The number of entries $k$ is the length of the side of the Durfee square of $\lambda$, that is, the largest value such that $\lambda_k\ge k$. The largest entry $i_k$ is the number of boxes in the largest hook of $\lambda$, which consists of the first column and first row of its Young diagram. Now remove the largest hook of $\lambda$ and define $i_{k-1}$ to be the number of boxes in the largest hook of the remaining Young diagram. Similarly, the remaining entries $i_j$ are defined recursively by peeling off hooks in the Young diagram. 

Define
$$\hd^*(P)=\begin{cases} \hd(P)\cup\{n\} & \mbox{if $P$ begins with an $N$ step,} \\ \hd(P) & \mbox{otherwise}.\end{cases}$$

\begin{lemma}\label{lem:hdpeak}
There is a bijection $\bijhd:\Y_{a,b}\rightarrow\Y_{a,b}$ such that $$\Peak(P)=\hd(\bijhd(P)) \quad \mbox{and}\quad \Peak^*(P)=\hd^*(\bijhd(P)).$$
\end{lemma}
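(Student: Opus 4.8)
The plan is to construct the bijection $\bijhd$ explicitly, generalizing the square-box construction of~\cite{BBES} to the rectangle $R_{a,b}$, and to verify simultaneously the unstarred identity $\Peak(P)=\hd(\bijhd(P))$ together with the single extra condition that forces the starred one. Set $n=a+b$. First I would do the bookkeeping that puts the two statistics on the same footing: I would show that $\Peak(P)$ and $\hd(P)$ are both subsets of $[n-1]$ containing no two consecutive integers. For $\Peak$ this is immediate, since an occurrence of $NE$ at position $i$ rules out one at position $i+1$; for $\hd$ it is the standard fact that consecutive principal hook lengths drop by at least $2$, because both the arm $\lambda_j-j$ and the leg $\lambda'_j-j$ strictly decrease as one moves down the diagonal. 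Thus both take values in the same family $\mathcal F_{a,b}$ of sparse subsets of $[n-1]$, with the box imposing matching realizability constraints (the number of elements is at most $\min(a,b)$, being the number of peaks on one side and the Durfee side on the other). I would also record the two endpoint dictionaries: $P$ ends with $N$ exactly when its first row is full ($\lambda_1=b$, i.e.\ $P$ touches the right edge), while $\bijhd(P)$ begins with $N$ exactly when its partition has fewer than $a$ rows (it misses the bottom edge). Since $n\notin[n-1]$, once $\Peak(P)=\hd(\bijhd(P))$ is established, the starred identity $\Peak^*(P)=\hd^*(\bijhd(P))$ reduces to the single Boolean equivalence ``$P$ ends with $N$'' $\iff$ ``$\bijhd(P)$ begins with $N$''.

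Next I would define $\bijhd$ recursively by peeling, matching the peaks of $P$ with the principal hooks of $\bijhd(P)$. Reading the peaks from the largest, I match the largest peak position with the outermost principal hook of $\bijhd(P)$ and declare that hook length to equal the peak position; the remainder of $P$, after stripping the outer layer that produced this peak, is mapped recursively and the stripped hook is reinserted around a Durfee square of the appropriate side. The arm/leg split of each inserted hook is dictated by the local shape of $P$ at the corresponding peak, and this is precisely the data making the map invertible: from $\bijhd(P)$ one recovers the principal hook lengths (hence the peak positions) and the splits (hence how to rebuild the runs of $P$). I would then check that the output lies in $\Y_{a,b}$ and produce the inverse as the analogous peak-building procedure applied to the principal hooks of a given $Q$, thereby confirming bijectivity.

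The hard part will be the box constraint together with the endpoint bit. Unlike the square case of~\cite{BBES}, an arm and its paired leg cannot be chosen freely once their sum (the hook length) is fixed: staying inside a genuinely rectangular $R_{a,b}$ couples the two bounds $a_j\le b-j$ and $l_j\le a-j$, and I expect this coupling to be the main obstacle. I would handle it by induction on $n$ (equivalently on the number of peeled layers), using the sparsity of $\mathcal F_{a,b}$ to guarantee room for each successive hook and showing that the chosen peeling order and arm/leg rule never violate either bound. The endpoint equivalence should then fall out of the base step of the recursion: the outer layer of $P$ ends in $N$ precisely when $P$'s first row is full, which is exactly the condition under which the reconstruction forces $\bijhd(P)$ to start with an $N$ (its partition to miss the bottom row).

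As a safeguard and an alternative route to mere existence of $\bijhd$, I would verify the claim at the level of fibers: for every $S\in\mathcal F_{a,b}$,
$$\#\{P\in\Y_{a,b}:\Peak(P)=S,\ P\text{ ends with }N\}=\#\{Q\in\Y_{a,b}:\hd(Q)=S,\ Q\text{ begins with }N\},$$
and likewise with ``$E$'' in place of ``$N$''. Matching these refined counts, each side parametrized respectively by the ways of realizing $S$ as a peak set and as a set of principal hook lengths inside the box, already produces a bijection with both required properties, of which the explicit $\bijhd$ above is one concrete instance. I have confirmed that these refined counts agree in small cases (for example $a=b=2$), which makes me confident the construction and its verification go through.
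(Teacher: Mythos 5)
You have the right mechanism---the bijection must transport the full coordinate data of the peaks, not merely the labels $x_j+y_j$, into the arm/leg data of the principal hooks---and your preliminary bookkeeping is sound: both statistics take values among sparse subsets of $[n-1]$, the starred identity does reduce to the single equivalence ``$P$ ends with $N$ $\iff$ $\bijhd(P)$ begins with $N$'', and your two endpoint dictionaries are correct. The genuine gap is at the step you yourself flag as the hard part: the arm/leg splitting rule is never defined (``dictated by the local shape of $P$ at the corresponding peak'' is not a rule), and the induction showing that the peeling respects both box bounds is promised but not performed. The fiber-counting safeguard does not close this either: the refined count identity is only verified for $a=b=2$, and proving it in general amounts to essentially the same work as exhibiting the bijection. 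So as written the proposal is a plan whose crux is open---and, notably, the crux is not where you expect it: with the correct splitting rule there is no coupling problem at all, and the delicate induction you brace for never materializes.

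Here is the rule, which is exactly what the paper's proof does (generalizing the map $\psi^{-1}$ from \cite[Lemma~3.5]{BBES} to rectangles). If the peaks of $P\in\Y_{a,b}$ are at $(x_j,y_j)$ with $0\le x_1<\cdots<x_k\le b-1$ and $1\le y_1<\cdots<y_k\le a$, give the hook of length $x_j+y_j$ the arm $x_j$ and the leg $y_j-1$. Strict increase of the $x_j$ and of the $y_j$ is verbatim the strict decrease of arms and of legs along the diagonal, and the box constraints reduce to $\lambda_1=x_k+1\le b$ and $\lambda'_1=y_k\le a$, which hold by the stated ranges; in your recursive peeling, the wrap-around condition at each insertion---the new arm at least the first row, and the new leg at least the first column, of the diagram already built---reads $x_k\ge x_{k-1}+1$ and $y_k-1\ge y_{k-1}$, which is automatic. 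The paper packages all of this in one non-recursive stroke: $\bijhd(P)=R_aR_{a-1}\cdots R_1S_1\cdots S_b$, where $R_{y_j}=E$ and $S_{x_j+1}=N$ for each $j$, and all other steps are $N$, resp.\ $E$. Bijectivity is then immediate, since the inverse reads off the positions of the $E$'s in the first block and of the $N$'s in the second, and every such pair of position sets occurs (the two blocks automatically carry equally many $E$'s and $N$'s). Your endpoint equivalence also becomes a one-liner: the first step $R_a$ is an $E$ iff some $y_j=a$, i.e., iff $P$ has a peak at height $a$, which for $a\ge1$ happens iff $P$ ends with an $E$ step (the case $a=0$ being trivial).
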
{}{

\begin{proof}
The map $\bijhd$ that we describe here is a generalization of the bijection $\psi^{-1}$ from~\cite[Lemma~3.5]{BBES} to arbitrary rectangles.
Given $P\in\Y_{a,b}$, let $(x_j,y_j)$, $1\le j\le k$ the coordinates of its $k$ peaks, where $0\le x_1<\dots<x_k\le b-1$ and $1\le y_1<\dots<y_k\le a$.
Then $$g(P)=R_aR_{a-1}\dots R_1S_1S_2\dots S_b,$$ 
where $R_{y_j}=E$ for $1\le j\le k$ and $R_i=N$ otherwise,
and $S_{x_j+1}=N$ for $1\le j\le k$ and $S_i=E$ otherwise.

Clearly, $\bijhd(P)\in\Y_{a,b}$ because it has $a-k+k$ $N$ steps and $b-k+k$ $E$ steps.
It is easy to check from the construction that $$\hd(\bijhd(P))=\{x_1+y_1,x_2+y_2,\dots,x_k+y_k\}=\Peak(P).$$
Also, the last step of $P$ is an $N$ if and only if the first step of $\bijhd(P)$ is an $N$, and so $\hd^*(\bijhd(P))=\Peak^*(P)$ as well.

To see that the map $\bijhd$ is a bijection, note that for any path $Q=R_aR_{a-1}\dots R_1S_1S_2\dots S_b\in\Y_{a,b}$, the positions of the $N$ and $E$ steps can be used to determine the coordinates $(x_j,y_j)$ of the peaks of the path $\bijhd^{-1}(Q)$.
\end{proof}

With some abuse of notation, we also denote by $\bijhd$ the bijection from $\A_n=\bigcup_{a+b=n}\Y_{a,b}$ to itself.

By Theorem~\ref{cara}, $\CI_{2n}(321)$ is in bijection with the family of all subsets of $[n]$. 
Let $\bijsubs$ be the bijection between $\CI_{2n}(321)$ and $\A_n$ given at the beginning of Section \ref{main:results} together with the interpretation of subsets of $[n]$ as lattice paths described above. For $\pi\in \CI_{2n}(321)$, we have
\beq\label{eq:despeak}\Des^+(\pi)=\Peak^*(\bijsubs(\pi)).\eeq
In particular, $\des^+(\pi)=|\Peak^*(\bijsubs(\pi))|$, and $\maj^+(\pi)=\sum_{i\in\Peak^*(\bijsubs(\pi))} i$.

\begin{proof}[Bijective proof of Theorem~\ref{altromodo}] By Equation~\eqref{eq:despeak},
$$\sum_{\pi\in \CI_{2n}(321)}q^{\des^+(\pi)}=\sum_{P\in\A_n} q^{|\Peak^*(P)|}.$$
To find the coefficient of $q^k$, it is enough to count the number of paths $P\in\A_n$ with $|\Peak^*(P)|=k$.
These are precisely paths of the form
$$P=E^{i_1}N^{j_1}NEE^{i_2}N^{j_2}NE\dots NEE^{i_{k}}N^{j_{k}}NE^{i_{k+1}},$$
where $i_\ell,j_\ell\ge0$ for all $\ell$, and $\sum_\ell i_{\ell}+\sum_\ell j_{\ell}=n-2k+1$.
Thus, the number of such paths is the number of ways to put $n-2k+1$ balls into $2k+1$ bins, which is
$\binom{n+1}{2k}$.
\end{proof}

\begin{proof}[Bijective proof of Theorem~\ref{initial}]
Consider the composition 
$$\CI_{2n}(321)\stackrel{\bijsubs}{\longrightarrow}\A_n\stackrel{\bijhd}{\longrightarrow}\A_n.$$
For $\pi\in \CI_{2n}(321)$, Equation~\eqref{eq:despeak} and Lemma~\ref{lem:hdpeak} imply that
$$\Des^+(\pi)=\Peak^*(\bijsubs(\pi))=\hd^*(\bijhd(\bijsubs(\pi)).$$
Note that for $P\in\A_n$, we have $\sum_{i\in\hd(P)}i=\area(P)$, the area of the Young diagram of which $P$ is the south east border. Thus, by the definition of $\hd^*$,
\beq\label{eq:hd*}\sum_{i\in\hd^*(P)}i=\begin{cases} \area(P)+n & \mbox{if $P$ begins with an $N$ step,} \\ \area(P) & \mbox{otherwise}.\end{cases}\eeq
It follows that
$$\sum_{\pi\in \CI_{2n}(321)}q^{\maj^+(\pi)}=\sum_{P\in\A_n} q^{\sum_{i\in\hd^*(P)}i}=\sum_{P\in\A_n} q^{\area(P)}+(q^n-1)\sum_{Q\in\A_{n-1}} q^{\area(Q)},$$
by separating paths that begin with an $N$ step and writing them as $P=NQ$, with $Q\in\A_{n-1}$ and $\area(P)=\area(Q)$. By definition of the $q$-binomial coefficients, we get
$$\sum_{\pi\in \CI_{2n}(321)}q^{\maj^+(\pi)}=\sum_{h=0}^{n} {n \choose h}_q+(q^n-1)\sum_{h=0}^{n-1} {n-1 \choose h}_q.$$

To obtain the equivalent expression $\sum_{h=0}^{n} q^{n-h}{n \choose h}_q$, we need one more bijection.
For any $P\in\Y_{n-h,h}$, let $P'\in\Y_{n-h,h}$ be the path obtained from $P$ by moving the first step of $P$ to the end. Then
$$\area(P')=\begin{cases} \area(P)+h & \mbox{if $P$ begins with an $N$ step,} \\ \area(P)-(n-h) & \mbox{otherwise}.\end{cases}$$
Combining this with equation~\eqref{eq:hd*}, we see that $\sum_{i\in\hd^*(P)}i=\area(P')+n-h$. Thus,
$$\sum_{\pi\in \CI_{2n}(321)}q^{\maj^+(\pi)}=\sum_{h=0}^n \sum_{P\in\Y_{n-h,h}} q^{\sum_{i\in\hd^*(P)}i}=\sum_{h=0}^n \sum_{P'\in\Y_{n-h,h}} q^{\area(P')+n-h}= \sum_{h=0}^n q^{n-h}{n \choose h}_q.$$
\end{proof}

\section{Connections with the hyperoctahedral group}\label{sec:typeB}

 Recall that the hyperoctahedral group $B_n$ is the set of bijections $\pi$ from the set \newline $\{-n,\ldots,-2,-1,1,2,\ldots,n\}$ to itself such that $\pi(-i)=-\pi(i)$ for $1\le i\le n$. In particular, $\pi$ is described by the sequence $\pi=\pi(1)\ldots\pi(n)$, sometimes called a signed permutation. We denote its absolute value by $|\pi|=|\pi(1)|\ldots|\pi(n)|\in\S_n$. It will be convenient to denote negative entries $-a$ by $\bar{a}$.

 The group of centrosymmetric permutations $\C_{2n}$ corresponds bijectively to
the hyperoctahedral group $B_n$ via the map $\Theta: \C_{2n}\to B_n$ that associates a permutation $\pi\in \C_{2n}$ to the signed permutation $\tilde\pi=\Theta(\pi)\in B_n$ defined by
$$
 \tilde\pi(i)=\begin{cases}\pi(n+i)-n& \textrm{if } \pi(n+i)> n\\ \pi(n+i)-n-1 & \textrm{otherwise,} \end{cases}
 $$
 for $1\le i\le n$ (a similar bijection appears in~\cite{Egge}).

For example, if $\pi = 24863157\in\C_{2n}$, then $\Theta(\pi) = \bar2\bar413\in B_n$.
Drawing a permutation $\pi\in\S_n$ as an $n\times n$ array with a marker in column $i$ and row $\pi(i)$ for each $i$,
where rows and columns are labeled by $1,2,\dots,n$ starting from the bottom left,
the operation $\Theta$ amounts to relabeling the rows and columns by $-n,\dots,-1,1,\dots,n$, as shown in Figure~\ref{fig:theta}. 
From this description, it is immediate that $\pi$ is an involution if and only if so is $\tilde\pi$ (meaning that $\tilde{\pi}(\tilde{\pi}(i))=i$ for every $i$), since involutions correspond to arrays that are symmetric with respect to the diagonal.

\definecolor{xdxdff}{rgb}{0.49019607843137253,0.49019607843137253,1.}
\definecolor{qqttcc}{rgb}{0.,0.2,0.8}
\definecolor{dcrutc}{rgb}{0.8627450980392157,0.0784313725490196,0.23529411764705882}
\definecolor{zzttqq}{rgb}{0.6,0.2,0.}
\begin{figure}[h]
\centering
\begin{tikzpicture}[line cap=round,line join=round,>=triangle 45,x=0.5cm,y=0.5cm]
\draw[dotted,thin] (1,1)--(9,9);
\fill[color=zzttqq,fill=zzttqq,fill opacity=0.1] (1.,1.) -- (9.,1.) -- (9.,9.) -- (1.,9.) -- cycle;
\fill[color=qqttcc,fill=qqttcc,fill opacity=0.1] (12.470698248735147,0.972177901199009) -- (20.470698248735147,0.972177901199009) -- (20.470698248735147,8.972177901199007) -- (12.470698248735147,8.972177901199007) -- cycle;
\draw [color=zzttqq] (1.,1.)-- (9.,1.);
\draw [color=zzttqq] (9.,1.)-- (9.,9.);
\draw [color=zzttqq] (9.,9.)-- (1.,9.);
\draw [color=zzttqq] (1.,9.)-- (1.,1.);
\draw (2.,1.)-- (2.,9.);
\draw (3.,1.)-- (3.,9.);
\draw (4.,1.)-- (4.,9.);
\draw (5.,1.)-- (5.,9.);
\draw (6.,1.)-- (6.,9.);
\draw (7.,1.)-- (7.,9.);
\draw (8.,1.)-- (8.,9.);
\draw (1.,8.)-- (9.,8.);
\draw (1.,7.)-- (9.,7.);
\draw (1.,6.)-- (9.,6.);
\draw (1.,5.)-- (9.,5.);
\draw (1.,4.)-- (9.,4.);
\draw (1.,3.)-- (9.,3.);
\draw (1.,2.)-- (9.,2.);
\draw (1.,1.)-- (9.,1.);
\draw (1.,9.)-- (9.,9.);
\draw (1.,9.)-- (1.,1.);
\draw (9.,9.)-- (9.,1.);
\draw [color=qqttcc] (12.470698248735147,0.972177901199009)-- (20.470698248735147,0.972177901199009);
\draw [color=qqttcc] (20.470698248735147,0.972177901199009)-- (20.470698248735147,8.972177901199007);
\draw [color=qqttcc] (20.470698248735147,8.972177901199007)-- (12.470698248735147,8.972177901199007);
\draw [color=qqttcc] (12.470698248735147,8.972177901199007)-- (12.470698248735147,0.972177901199009);
\draw[dotted,thin] (12.470698248735147,0.972177901199009)--(20.470698248735147,8.972177901199007);
\draw (13.470698248735147,0.972177901199009)-- (13.470698248735147,8.972177901199007);
\draw (14.470698248735147,0.972177901199009)-- (14.470698248735147,8.972177901199007);
\draw (15.470698248735147,0.972177901199009)-- (15.470698248735147,8.972177901199007);
\draw (16.470698248735147,0.972177901199009)-- (16.470698248735147,8.972177901199007);
\draw (17.470698248735147,0.972177901199009)-- (17.470698248735147,8.972177901199007);
\draw (18.470698248735147,0.972177901199009)-- (18.470698248735147,8.972177901199007);
\draw (19.470698248735147,0.972177901199009)-- (19.470698248735147,8.972177901199007);
\draw (12.470698248735147,7.972177901199007)-- (20.470698248735147,7.972177901199007);
\draw (12.470698248735147,6.972177901199007)-- (20.470698248735147,6.972177901199007);
\draw (12.470698248735147,5.972177901199007)-- (20.470698248735147,5.972177901199007);
\draw (12.470698248735147,4.972177901199008)-- (20.470698248735147,4.972177901199008);
\draw (12.470698248735147,3.972177901199008)-- (20.470698248735147,3.972177901199008);
\draw (12.470698248735147,2.972177901199009)-- (20.470698248735147,2.9721779011990086);
\draw (12.470698248735147,1.972177901199009)-- (20.470698248735147,1.9721779011990088);
\draw (12.470698248735147,0.9721779011990088)-- (20.470698248735147,0.9721779011990088);
\draw (12.470698248735147,8.972177901199009)-- (20.470698248735147,8.972177901199009);
\draw (12.470698248735147,8.972177901199009)-- (12.470698248735147,0.9721779011990088);
\draw (20.470698248735147,8.972177901199009)-- (20.470698248735147,0.9721779011990088);
\begin{scriptsize}
\draw [color=dcrutc] (1.5,5.5) circle (3.5pt);
\draw [color=dcrutc] (2.5,3.5) circle (3.5pt);
\draw [color=dcrutc] (3.5,2.5) circle (3.5pt);
\draw [color=dcrutc] (5.5,1.5) circle (3.5pt);
\draw [color=dcrutc] (4.5,8.5) circle (3.5pt);
\draw [color=dcrutc] (6.5,7.5) circle (3.5pt);
\draw [color=dcrutc] (7.5,6.5) circle (3.5pt);
\draw [color=dcrutc] (8.5,4.5) circle (3.5pt);
\draw [color=xdxdff] (12.970698248735147,5.472177901199009) circle (3.5pt);
\draw [color=xdxdff] (13.970698248735147,3.472177901199009) circle (3.5pt);
\draw [color=xdxdff] (14.970698248735147,2.472177901199009) circle (3.5pt);
\draw [color=xdxdff] (16.970698248735147,1.472177901199009) circle (3.5pt);
\draw [color=xdxdff] (15.970698248735147,8.472177901199007) circle (3.5pt);
\draw [color=xdxdff] (17.970698248735147,7.472177901199009) circle (3.5pt);
\draw [color=xdxdff] (18.970698248735147,6.472177901199009) circle (3.5pt);
\draw [color=xdxdff] (19.970698248735147,4.472177901199009) circle (3.5pt);
\end{scriptsize}
\node at (5,9.5){$\pi$};
\node at (16.5,9.5){$\tilde{\pi}$};
\node at (0.5,8.5) {8} ;
\node at (0.5,7.5) {7} ;
\node at (0.5,6.5) {6} ;
\node at (0.5,5.5) {5} ;
\node at (0.5,4.5) {4} ;
\node at (0.5,3.5) {3} ;
\node at (0.5,2.5) {2} ;
\node at (0.5,1.5) {1} ;
\node at (1.5,0.5) {1} ;
\node at (2.5,0.5) {2} ;
\node at (3.5,0.5) {3} ;
\node at (4.5,0.5) {4} ;
\node at (5.5,0.5) {5} ;
\node at (6.5,0.5) {6} ;
\node at (7.5,0.5) {7} ;
\node at (8.5,0.5) {8} ;
\node at (12,8.5) {4} ;
\node at (12,7.5) {3} ;
\node at (12,6.5) {2} ;
\node at (12,5.5) {1} ;
\node at (12,4.5) {-1} ;
\node at (12,3.5) {-2} ;
\node at (12,2.5) {-3} ;
\node at (12,1.5) {-4} ;
\node at (13,0.5) {-4} ;
\node at (14,0.5) {-3} ;
\node at (15,0.5) {-2} ;
\node at (16,0.5) {-1} ;
\node at (17,0.5) {1} ;
\node at (18,0.5) {2} ;
\node at (19,0.5) {3} ;
\node at (20,0.5) {4} ;
\end{tikzpicture}
\caption{An involution $\pi=53281764\in\C_8$ and the corresponding signed involution $\tilde{\pi}=\bar{4}32\bar{1}\in B_4$.}
\label{fig:theta}
\end{figure}
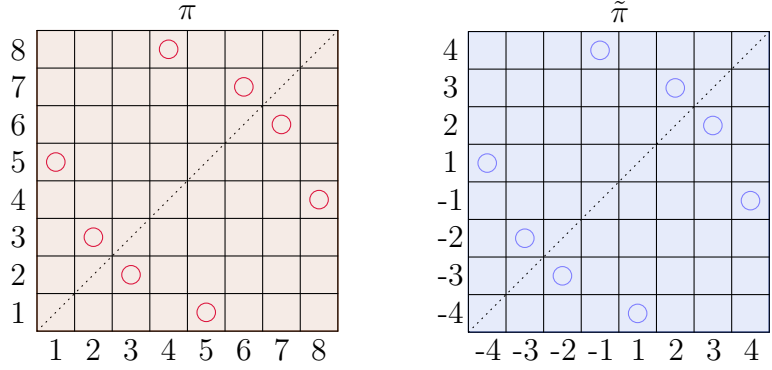

The study of pattern avoidance on the hyperoctahedral group has been carried out by many authors in terms of signed patterns. If $\pi\in B_n$ and $\tau\in B_k$,
$k\leq n$, we say that $\pi$ contains the pattern $\tau$ if there exists a
sequence of indices $1\leq i_1 < i_2 < \cdots < i_k \leq n$ such that two condition hold:
\begin{itemize}
\item $|\pi(i_1)|\ldots|\pi(i_k)|$ is order-isomorphic to $|\tau|$.
\item $\pi(i_j)$ has the same sign as $\tau_j$ for $1\le j\le k$.
\end{itemize}
We say that $\pi$ avoids $\tau$ if $\pi$ does not contain $\tau$.
For example, the signed permutation $\pi = 6\bar15\bar3\bar24$ avoids the pattern
$\tau=\bar21$ while it contains the pattern $\tau'= 2\bar1$.

 Signed pattern avoidance has received a lot of attention in recent years (see \cite{MW}, \cite{SIM}, and \cite{STE}). The image of the set $\C_{2n}(321)$ under $\Theta$ can be characterized in terms of signed pattern avoidance as follows. We remark that this result appears in \cite{Egge} in a slightly different form.

\begin{prop}\label{glielametto}
The set $\Theta(\C_{2n}(321))$ is the set of the elements in $B_n$ that avoid the six patterns 
$$321,\quad \bar321,\quad 32\bar1,\quad \bar32\bar1,$$
$$1\bar2, \quad \bar1\bar2.$$
\end{prop}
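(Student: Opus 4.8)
The plan is to recognize that $\Theta$ is nothing but an order-preserving relabeling of the two axes of the permutation array. Concretely, writing $v_i=\tilde\pi(i)$ for $1\le i\le n$ and extending by $\tilde\pi(-i)=-v_i$, the full array of $\tilde\pi$ on $\{-n,\dots,-1,1,\dots,n\}$, read column by column from left to right, is the word
$$w=(-v_n,-v_{n-1},\dots,-v_1,v_1,v_2,\dots,v_n),$$
and this word is order-isomorphic to $\pi$ itself (on the example $\pi=53281764$ one checks that $w=(1,-2,-3,4,-4,3,2,-1)$ has rank sequence $53281764$). Since pattern containment depends only on relative order, $\pi$ avoids $321$ if and only if $w$ has no decreasing subsequence of length three. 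Thus Proposition~\ref{glielametto} is equivalent to the statement that $w$ avoids $321$ if and only if the signed word $v=(v_1,\dots,v_n)$ avoids the six listed patterns, and I would prove this equivalence directly.

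First I would record the symmetry $w_{2n+1-k}=-w_k$, which means the $180^\circ$ rotation of the array carries any decreasing subsequence of length three to another one while interchanging the left half (negative columns) with the right half (positive columns, i.e.\ the positions of $v$). Since a triple of columns and its rotated image have complementary numbers of positive columns, every occurrence of $321$ in $w$ is equivalent to one having at least two of its three columns among the positive positions. So for the forward direction it suffices to analyze occurrences with exactly two or three positive columns.

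For the forward direction (if $w$ contains $321$ then $v$ contains one of the six patterns) I would split into the case of three positive columns $i_1<i_2<i_3$ with $v_{i_1}>v_{i_2}>v_{i_3}$, and the case of two positive columns $i_2<i_3$ together with one negative column $-j$ that is forced to be leftmost, giving $-v_j>v_{i_2}>v_{i_3}$. In the first case the sign vector is one of $(+,+,+)$, $(+,+,-)$, $(+,-,-)$, $(-,-,-)$, and each time either the full triple is one of $321,32\bar1$, or a pair of entries exhibits $1\bar2$ or $\bar1\bar2$. In the second case one reads off the pattern on the positions $\{j,i_2,i_3\}$ taken in increasing order; the subtlety is that $j$ may interleave arbitrarily with $i_2,i_3$ and that the order of absolute values is not determined, so the triple is often not itself a forbidden pattern. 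The key observation that resolves every sub-case is that whenever the construction produces a positive value to the left of a larger-in-absolute-value negative value one obtains $1\bar2$, and whenever two negative values appear with increasing absolute value one obtains $\bar1\bar2$; with these two escape hatches one checks that the genuinely $321$-shaped triples yield $\bar321$ or $\bar32\bar1$, and all remaining configurations contain $1\bar2$ or $\bar1\bar2$. I expect this two-positive-column analysis, and in particular the bookkeeping of the interleaving position $j$, to be the main obstacle.

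For the converse direction I would exhibit, for each of the six patterns, an explicit length-three decreasing subsequence in $w$. The patterns $321$ and $32\bar1$ already give three decreasing values among the positive positions, so they sit entirely in the right half of $w$. For $\bar321$ and $\bar32\bar1$ I would replace the leading negative value $v_{i_1}$ by its mirror $w_{-i_1}=-v_{i_1}>0$, using the columns $-i_1<i_2<i_3$ to obtain the decreasing triple. Finally, for the length-two patterns I use columns on the opposite side: from $1\bar2$ at positions $i_1<i_2$ the columns $-i_2<i_1<i_2$ carry the decreasing rows $(-v_{i_2},v_{i_1},v_{i_2})$, and from $\bar1\bar2$ the columns $-i_2<-i_1<i_1$ carry $(-v_{i_2},-v_{i_1},v_{i_1})$. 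Assembling the two directions proves the proposition; since the statement also appears in \cite{Egge}, this argument can alternatively be phrased as a verification that the two descriptions coincide.
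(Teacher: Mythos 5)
Your proposal is correct and follows essentially the same route as the paper: your word $w$ with the symmetry $w_{2n+1-k}=-w_k$ is just the centrosymmetric $\pi$ in disguise, your rotation argument reducing to at least two positive columns is exactly the paper's WLOG reduction to the cases $j,k,l>n$ and $j\le n<k,l$, and your sign case analysis with the $1\bar2$ and $\bar1\bar2$ escape hatches matches the paper's splits (e.g.\ comparing $b$ with $2n+1-a$, and whether the mirror value $d=2n+1-c$ precedes $b$). The only differences are ones of emphasis --- you write out explicit witnesses for the converse direction, which the paper dismisses as ``straightforward considerations,'' while leaving the two-positive-column bookkeeping as a sketch, the part the paper details --- but your two escape-hatch observations do resolve all of its sub-cases, so the plan is sound as stated.
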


\begin{proof} Consider a permutation $\pi\in \C_{2n}(321)$. Straightforward considerations imply that $\Theta(\pi)$ must avoid the six signed patterns above, since every occurrence of one of those signed patterns in  $\Theta(\pi)$ yields an occurrence of $321$ in $\pi$.

 Conversely, suppose that the permutation $\pi$ contains an occurrence $cba$ of $321$. We want to show that $\Theta(\pi)$ contains at least one of the six patterns above. Denote by $j,k,$ and $l$ the positions in $\pi$ of $c,b,$ and $a$, respectively. Note that $c>b>a$ and $j<k<l$. Without loss of generality (due to the fact that $\pi$ is centrosymmetric), we can assume that either 

\begin{itemize}
\item[i)] $j,k,l>n$, or
\item[ii)] $j\leq n<k,l$.
\end{itemize} 

 In both cases, if $b,a\leq n$, then $\Theta(\pi)$ contains $\bar1\bar2$, so we will assume that $b>n$. Consider first case i). 
If $c,b,a>n$, then $\Theta(\pi)$ contains $321$. Suppose that $c,b>n\geq a$. If $b<2n+1-a$, then $\Theta(\pi)$ contains $1 \bar2$, while  if $b>2n+1-a$, then  $\Theta(\pi)$ contains $32\bar1$ (note that $b\neq 2n+1-a$ since the centrosymmetric condition forces $2n+1-a$ to appear in a position less than $n$).  

 Consider now case ii). Set  $d=2n+1-c$, and note that position of $d$ in $\pi$ is greater than $n$. If $c,b,a>n$, then $\Theta(\pi)$ contains either $\bar321$ (if $d$ precedes $b$ in $\pi$) or $1\bar2$ (otherwise). If $c,b>n\geq a$, then $\Theta(\pi)$ contains either $\bar32\bar1$ (if $d$ precedes $b$ in $\pi$) or  $1\bar2$ (otherwise). This completes the proof. 
\end{proof}

In \cite{STE}, Stembridge gave a characterization of the set $T_n$ of the so-called fully commutative top elements in $B_n$. In his paper, he proves that $T_n$ is precisely the subset of permutations in $B_n$ that avoid the six patterns appearing in Proposition \ref{glielametto}. Hence,
$\Theta(\C_{2n}(321))=T_n$. However, Stembridge does not consider the subset of involutory elements in $T_n$.

 In the literature, many definitions of the descent set and of the major index of a signed permutation can be found (see e.g. \cite{BIJONA}, \cite{biaze}, and \cite{reiner}). The notions of $\Des^+$ and $\maj^+$ coincide with the analogous statistics $\Des_B$ and $\maj$ introduced in \cite{biaze}, in the sense that
$\Des^+(\pi)=\Des_B(\Theta(\pi))$ and $\maj^+(\pi)=\maj(\Theta(\pi))$.

\section{Descents and fixed points in $321$-avoiding involutions}\label{sec:fp}

The distribution of the major index over the set of $321$-avoiding involutions was studied in~\cite{BBES}, where it is shown to be given by the $q$-analogue of the central binomial coefficients. This is proved by constructing a bijection, as stated in the following theorem, which is the main result from~\cite{BBES}.

\begin{theorem}[{\cite[Theorem 3.4]{BBES}}]\label{thm:BBES}
There is a bijection between $\I_n(321)$ and $\Y_{\fn2,\cn2}$ that maps $\Des$ to $\hd$.
\end{theorem}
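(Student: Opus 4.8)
The plan is to construct the bijection explicitly and verify the descent-to-hook correspondence, leveraging the machinery already built in the excerpt rather than reinventing it. The key observation is that Theorem~\ref{thm:BBES} is exactly the special case of my framework where the rectangle is a square (or nearly square): when $m=2n+1$ is odd, I showed that $\CI_{2n+1}(321)$ is in bijection with $\I_n(321)$ via the decomposition $\pi=\alpha\,n{+}1\,\alpha'$, and that $\Des^+$, $\maj^+$, $\des$ on the centrosymmetric side reduce to $\Des$, $\maj$, $2\des$ on $\alpha\in\I_n(321)$. So the natural approach is to compose three maps I already have in hand.

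First I would recall that any $\alpha\in\I_n(321)$ is determined by its excedance set, exactly as in the centrosymmetric case; reading off the positions $i\in[n]$ with $\alpha(i)>i$ gives a subset of $[n]$, which I identify with a lattice path $P\in\A_n=\bigcup_{a+b=n}\Y_{a,b}$ via the rule that the $i$th step is $N$ iff $i$ lies in the excedance set. The combinatorics of $321$-avoiding involutions (non-nesting matchings, together with the fact that excedance positions force the shape of the matching) shows that this map is well-defined and a bijection onto $\A_n$, with the excedance positions encoding precisely the descent structure. In particular, by the same local analysis as in Lemma~\ref{conno}, the descent set $\Des(\alpha)$ corresponds to the peak-type data of $P$, so that $\Des(\alpha)=\Peak^*(P)$, the analogue of Equation~\eqref{eq:despeak}. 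The Durfee square of the diagram associated to $P$ has side $\fn2$ when the excedances are balanced appropriately, which is what pins down the target rectangle $\Y_{\fn2,\cn2}$.

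Next I would apply the bijection $\bijhd$ from Lemma~\ref{lem:hdpeak}, which sends a path $P\in\Y_{a,b}$ to a path $\bijhd(P)\in\Y_{a,b}$ satisfying $\Peak(P)=\hd(\bijhd(P))$ and $\Peak^*(P)=\hd^*(\bijhd(P))$. Composing the excedance-to-path map with $\bijhd$ then yields a bijection from $\I_n(321)$ to $\Y_{\fn2,\cn2}$ under which $\Des(\alpha)=\Peak^*(P)=\hd^*(\bijhd(P))$; restricting attention to the case where the boundary step contributions match $\hd$ rather than $\hd^*$ recovers exactly the statement that $\Des$ is mapped to $\hd$. The distributional consequence then follows since $\sum_{i\in\hd(Q)}i=\area(Q)$ and the area generating function over $\Y_{\fn2,\cn2}$ is the $q$-central binomial coefficient $\binom{n}{\fn2}_q$.

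The main obstacle I anticipate is bookkeeping the correct rectangle and the off-by-one between $\Peak$ and $\Peak^*$ (equivalently between $\hd$ and $\hd^*$). The statement of Theorem~\ref{thm:BBES} pairs $\Des$ with $\hd$ directly, with no starred versions, so I must be careful about whether the first or last step of the path should be incorporated into the statistic, and about which side of the almost-square rectangle $\Y_{\fn2,\cn2}$ absorbs the parity of $n$. Verifying that the Durfee-square side length of the image diagram is exactly $\fn2$, and that the excedance encoding lands in the correct rectangle rather than a shifted one, is the delicate point; once the indexing is aligned, the three maps compose cleanly and the descent-hook correspondence is inherited from Lemma~\ref{lem:hdpeak}.
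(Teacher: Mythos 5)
Your first step fails on cardinality grounds, and it is the crux of the whole proposal. You claim that the excedance-position encoding is ``a bijection onto $\A_n$'' for $\alpha\in\I_n(321)$; but $|\I_n(321)|=\binom{n}{\fn2}$ while $|\A_n|=2^n$, so no such bijection can exist. Already for $n=3$ the subset $\{1,2\}$ is the excedance set of no $321$-avoiding involution of $\S_3$ (the only such involutions are $123$, $213$, $132$, with excedance sets $\emptyset$, $\{1\}$, $\{2\}$). The bijection of Theorem~\ref{cara} onto all of $2^{[n]}$ is genuinely special to the centrosymmetric setting: in its proof, each unmatched $i\in E$ can always be closed because its mirror $2n+1-i$ lies in the second half and is never in $E$. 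A plain involution in $\S_n$ has no mirror half, every excedance must be closed inside $[n]$, and the excedance map is only an injection whose image is the ballot-type family of paths in which every suffix contains at least as many $E$ steps as $N$ steps --- a family of size $\binom{n}{\fn2}$, which your proposal never identifies. Consequently the two places where the target rectangle should emerge are unsupported: ``the Durfee square \dots has side $\fn2$ when the excedances are balanced appropriately'' and ``restricting attention to the case where the boundary step contributions match $\hd$'' are not arguments, and nothing in the outline produces a surjection onto $\Y_{\fn2,\cn2}$ specifically. (Your worry about $\Peak$ versus $\Peak^*$ is, by contrast, vacuous here: $n$ is never an excedance of an element of $\S_n$, so the associated path never ends with an $N$ step and $\Peak^*(P)=\Peak(P)$.)

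The missing idea is exactly the device the paper uses. The paper proves the more general Theorem~\ref{thm:fp} (of which Theorem~\ref{thm:BBES} is the case $b=a$ or $b=a+1$): apply Robinson--Schensted to get a two-row standard Young tableau, encode it as a path in $\A_{a+b}$ weakly above the diagonal whose peak set is $\Des(\pi)$ and whose excess of $N$ steps over $E$ steps is $\fp(\pi)$, then match $N$s and $E$s that face each other and flip the leftmost $(\fp(\pi)+b-a)/2$ unmatched $N$ steps into $E$ steps. That matching-and-flipping step is precisely the peak-preserving bijection from the ballot-type family into the rectangle $\Y_{a,b}$ that your outline lacks; only after it does Lemma~\ref{lem:hdpeak} apply, giving $\Des(\pi)=\Peak(P)=\hd(\bijhd(P))$. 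If you repaired your approach by (i) characterizing the image of the excedance encoding by the suffix condition above and (ii) building a peak-preserving bijection from that family onto $\Y_{\fn2,\cn2}$, step (ii) would essentially reproduce the paper's flipping construction --- so the gap you have left open is not bookkeeping but the heart of the proof.
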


Let $\fp(\pi)$ be the number of fixed points of $\pi$, that is, elements $i$ such that $\pi(i)=i$.
Using the bijection from Lemma~\ref{lem:hdpeak}, we can generalize Theorem~\ref{thm:BBES} as follows.

\begin{theorem}\label{thm:fp}
Let $b\ge a\ge 0$. There is a bijection
$$\genbij:\{\pi\in\I_{a+b}(321):\fp(\pi)\ge b-a\}\longrightarrow\Y_{a,b}$$
such that if $\genbij(\pi)=\lambda$, then $\Des(\pi)=\hd(\lambda)$.
\end{theorem}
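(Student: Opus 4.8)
The plan is to factor the desired map as $\genbij=\bijhd\circ\phi_0$, where $\bijhd$ is the map of Lemma~\ref{lem:hdpeak} (which turns $\Peak$ into $\hd$ on $\Y_{a,b}$) and $\phi_0\colon\{\pi\in\I_{a+b}(321):\fp(\pi)\ge b-a\}\to\Y_{a,b}$ is a new bijection with the property that $\Des(\pi)=\Peak(\phi_0(\pi))$. Granting this, Lemma~\ref{lem:hdpeak} gives $\hd(\genbij(\pi))=\hd(\bijhd(\phi_0(\pi)))=\Peak(\phi_0(\pi))=\Des(\pi)$, which is exactly the assertion. Thus everything reduces to constructing $\phi_0$. (Note that $\Peak$, not $\Peak^*$, is the right statistic here, since $\pi$ never has a descent at the last position.)

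First I would record a descent rule for $321$-avoiding involutions parallel to Lemma~\ref{conno}. Viewing $\pi$ as a non-nesting matching, classify each position $i$ as an \emph{opener} if $\pi(i)>i$, a \emph{closer} if $\pi(i)<i$, or a \emph{fixed point} if $\pi(i)=i$; the non-nesting condition (no arc may cover a singleton) forces every fixed point to occur at ``ground level'', meaning that all arcs opened to its left have already closed. A short case analysis on the pair of types at positions $i,i+1$, using $321$-avoidance exactly as in Lemma~\ref{conno}, then shows that $i\in\Des(\pi)$ if and only if $i$ is an opener and $i+1$ is a closer. So descents are precisely the ``opener immediately followed by closer'' adjacencies.

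Next I would define $\phi_0$ by scanning $\pi$ from left to right and emitting one step per position: each opener becomes $N$, each closer becomes $E$, and the fixed points are split so that the rightmost $s:=a-m$ of them become $N$ and the rest become $E$, where $m=(a+b-\fp(\pi))/2$ is the number of arcs. Since $s=\tfrac12(\fp(\pi)-(b-a))$ and $m\le a\le b$, one checks that $0\le s\le\fp(\pi)$ holds exactly when $\fp(\pi)\ge b-a$, so the rule is well defined precisely on the stated domain, and the output has $m+s=a$ steps $N$ and $b$ steps $E$, hence lies in $\Y_{a,b}$. The crucial point is that this produces no spurious peaks: because fixed points sit at ground level, an $N$ arising from a fixed point is always followed by another $N$ or by the end of the path, and an $E$ arising from a fixed point is always preceded by another $E$ or by the start of the path, so no fixed-point step can play the role of either letter of an $NE$ occurrence; meanwhile every genuine opener–closer descent is a pair of \emph{adjacent} positions and survives as an adjacent $NE$. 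Hence $\Peak(\phi_0(\pi))=\Des(\pi)$.

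The main obstacle is proving that $\phi_0$ is a bijection, i.e. reconstructing $\pi$ from a path $Q\in\Y_{a,b}$. Here I would recover the type word by tracking the \emph{core height} of $Q$, that is, the height obtained by counting every $N$ as $+1$ and only the genuine closers as $-1$. Because all fixed-point-$E$ steps lie strictly to the left of all fixed-point-$N$ steps, the core height at a fixed-point-$E$ step is $0$ while at a genuine closer it is positive, so the single left-to-right scan cleanly separates genuine closers (the $E$-steps at positive core height) from fixed-point-$E$ steps (the $E$-steps at core height $0$); matching these closers to openers in FIFO order then identifies the genuine arcs, and the leftover $N$-steps are the fixed-point-$N$ steps. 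The delicate verification — where the ground-level property of fixed points and the parity identity $\fp(\pi)\equiv b-a\pmod 2$ must be used — is that the reconstructed fixed points again land at ground level, so that no arc covers a singleton and the output is a genuine $321$-avoiding involution, and that this procedure is a two-sided inverse of $\phi_0$. Finally I would observe that when $b-a\in\{0,1\}$ the constraint $\fp(\pi)\ge b-a$ is vacuous, so $\genbij$ specializes to a bijection $\I_{a+b}(321)\to\Y_{\fn2,\cn2}$ carrying $\Des$ to $\hd$, recovering Theorem~\ref{thm:BBES}.
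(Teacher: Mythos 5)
Your forward construction is correct, and it is in substance the same function as the paper's: the paper routes through the Robinson--Schensted correspondence (two-row tableaux, plus Schensted's theorem to show the row-length difference equals $\fp(\pi)$), obtains exactly your type word (openers and fixed points become $N$, closers become $E$; the path stays weakly above the diagonal), and then turns the leftmost $(\fp(\pi)+b-a)/2$ \emph{unmatched} $N$ steps---which are precisely the fixed points, since fixed points sit at ground level---into $E$ steps before applying $\bijhd$ from Lemma~\ref{lem:hdpeak}. Your derivation of the descent rule directly from the non-nesting matching (in the spirit of Lemma~\ref{conno}), the count showing $0\le s\le\fp(\pi)$ exactly on the stated domain, and the no-spurious-peaks argument are all sound, and they make the forward direction more self-contained than the paper's (no RSK needed).

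The genuine gap is in your inverse. Classifying the $E$ steps by core height is fine (it coincides with the paper's matching of steps that ``face each other'': the fixed-point $E$s are exactly the $E$s arriving at height $0$). But the rule ``match these closers to openers in FIFO order; the leftover $N$-steps are the fixed-point $N$s'' fails, because a fixed-point $N$ may precede the opener of a later arc. Concretely, take $a=b=2$ and $\pi=1243$, with fixed points $1,2$ and arc $(3,4)$; then $s=1$ and $\phi_0(\pi)=ENNE$. Your scan correctly identifies $E_1$ as a fixed-point step and $E_4$ as the unique genuine closer, but FIFO matching pairs $E_4$ with $N_2$, leaving $N_3$ as a purported fixed point---this yields the arc $(2,4)$ covering the singleton $3$, i.e.\ the permutation $1432$, which is not $321$-avoiding and is not the preimage. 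The two matching disciplines play different roles and you have used the wrong one for membership: the fixed-point $N$s must be identified as the $N$ steps left \emph{unmatched by the standard LIFO (parenthesis) matching} of the whole word---equivalently, those $N$s whose height level the path never revisits---and only \emph{then} are the surviving openers matched to the closers in FIFO order, which is the unique non-nesting completion. (Note that pure LIFO for the arcs also fails, e.g.\ on $NNEE$, where it would produce the nested pair $(1,4),(2,3)$.) This two-stage inverse is exactly what the paper does by matching $N$s and $E$s along slope-$1$ segments below the path, flipping the unmatched $E$s, and reading the fixed points off the unmatched $N$s; with this one correction, your argument goes through and your concluding specialization to $b-a\in\{0,1\}$, recovering Theorem~\ref{thm:BBES}, is fine.
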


For $b=a$ and $b=a+1$ we recover Theorem~\ref{thm:BBES}.

\begin{proof}
Following the same idea from the proof of Theorem~\ref{thm:BBES} given in~\cite{BBES}, the first step of the construction is the Robinson--Schensted correspondence, which gives a bijection between $\I_{a+b}(321)$ and standard Young tableaux with $a+b$ boxes and at most two rows.
We claim that the number of fixed points of $\pi\in\I_{a+b}(321)$ equals the difference in size of the two rows of the corresponding standard Young tableau. To see this, first note that since $\pi$ is an involution, its excedances and antiexcedances (i.e., positions $i$ such that $\pi(i)<i$) are naturally paired up by symmetry. It follows that the excedance values and fixed points of $\pi$ form a longest increasing subsequence (such a sequence is increasing because $\pi$ avoids $321$, and it is longest because at most one element of each pair excedance-antiexcedance can be in it).
By Schensted's theorem~\cite{Sche}, the length of a longest increasing sequence is the size of the first row of the tableau. Thus, the size of the second row has to be equal to the number of antiexcedances, which equals the number of excedances. It follows that the difference between the sizes of the rows of the tableau is $\fp(\pi)$, as claimed. Alternatively, this fact be easily derived from~\cite{EliPak}.

From the standard Young tableau, we construct a path in $\A_{a+b}$ whose $i$-th step is an $N$ if $i$ is in the top row of the tableau, and an $E$ otherwise. By construction, this path does not go below the diagonal $y=x$, and its number of $N$ steps minus its number of $E$ steps equals $\fp(\pi)$, and so it has $(a+b+\fp(\pi))/2$ $N$ steps and $(a+b-\fp(\pi))/2$ $E$ steps. Additionally, $\Des(\pi)$ becomes the peak set of this path, since $i$ is a descent of $\pi$ if and only if $i$ is in the top row of the corresponding tableau and $i+1$ is in the bottom row.

The next step in the construction consists in matching $N$s and $E$s that face each other in the path, in the sense that the line segment from the midpoint of $N$ to the midpoint of $E$ has slope $1$ and stays below the path. Thinking of the $N$s as opening parentheses and the $E$s as closing parentheses, the matched parentheses properly close each other.
Note that every $E$ step gets matched with an $N$ step, but there are $\fp(\pi)$ unmatched $N$ steps. Construct a new path $P$ by changing the leftmost $(\fp(\pi)+b-a)/2$ unmatched $N$ steps into $E$ steps. The resulting path has $a$ $N$ steps and $b$ $E$ steps, so $P\in\Y_{a,b}$. This step is a bijection between paths in $\A_{a+b}$ not going below $y=x$ and having $\fp(\pi)\ge b-a$ unmatched steps, and $\Y_{a,b}$. It has the property that it preserves the positions of the peaks, and so $\Des(\pi)=\Peak(P)$. The inverse of this step is obtained  by matching $N$s and $E$s that face each other in the path in $\Y_{a,b}$, and then changing all the unmatched $E$s (which necessarily come before the unmatched $N$s, and of which there is at least $b-a$ of them) into $N$s.

Finally, the last step is the bijection $\bijhd$ from Lemma~\ref{lem:hdpeak} applied to $P$. The composition of these bijections produces a path $\lambda:=g(P)\in\Y_{a,b}$ with $\Des(\pi)=\hd(\lambda)$.
\end{proof}

Theorem~\ref{thm:BBES} is used in~\cite{BBES} to prove that
$\sum_{\pi\in\I_n(321)} q^{\maj(\pi)}=\binom{n}{\fn2}_q$. We now get a refinement of this result with respect to the number of fixed points.

\begin{corollary}
For $b\ge a\ge 0$,
$$\sum_{\substack{\pi\in\I_{a+b}(321)\\ \fp(\pi)\ge b-a}} q^{\maj(\pi)}=\binom{a+b}{a}_q.$$
Consequently, if $\ell\equiv n\bmod 2$, then
$$\sum_{\substack{\pi\in\I_{n}(321)\\ \fp(\pi)=\ell}} q^{\maj(\pi)}=\binom{n}{\frac{n-\ell}{2}}_q-\binom{n}{\frac{n-\ell}{2}-1}_q.$$
\end{corollary}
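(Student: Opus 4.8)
The plan is to derive the corollary directly from Theorem~\ref{thm:fp}, exploiting the fact that the bijection $\genbij$ sends $\Des(\pi)$ to $\hd(\lambda)$, together with the standard identity (already invoked in the excerpt) that for a path $P\in\A_{a+b}$ with corresponding Young diagram $\lambda$, one has $\sum_{i\in\hd(\lambda)}i=\area(\lambda)$. Concretely, $\maj(\pi)=\sum_{i\in\Des(\pi)}i=\sum_{i\in\hd(\lambda)}i=\area(\lambda)$, so the major index is transported to the area statistic on $\Y_{a,b}$.

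\begin{proof}
By Theorem~\ref{thm:fp}, the map $\genbij$ is a bijection from $\{\pi\in\I_{a+b}(321):\fp(\pi)\ge b-a\}$ to $\Y_{a,b}$ satisfying $\Des(\pi)=\hd(\genbij(\pi))$. As noted before the statement of Theorem~\ref{initial}, for any path with associated Young diagram $\lambda$ we have $\sum_{i\in\hd(\lambda)}i=\area(\lambda)$. Therefore, writing $\lambda=\genbij(\pi)$,
$$\maj(\pi)=\sum_{i\in\Des(\pi)}i=\sum_{i\in\hd(\lambda)}i=\area(\lambda).$$
Summing over all such $\pi$ and using the bijectivity of $\genbij$,
$$\sum_{\substack{\pi\in\I_{a+b}(321)\\ \fp(\pi)\ge b-a}} q^{\maj(\pi)}=\sum_{\lambda\in\Y_{a,b}} q^{\area(\lambda)}=\binom{a+b}{a}_q,$$
where the last equality is the well-known generating function for the area statistic over Young diagrams in a rectangle, cited from~\cite{Sta} in the discussion preceding Proposition~\ref{prop:recr}.

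For the second formula, fix $n$ and $\ell$ with $\ell\equiv n\bmod 2$, and set $a=\frac{n-\ell}{2}$, $b=\frac{n+\ell}{2}$, so that $a+b=n$ and $b-a=\ell$. The first formula gives
$$\sum_{\substack{\pi\in\I_{n}(321)\\ \fp(\pi)\ge \ell}} q^{\maj(\pi)}=\binom{n}{\frac{n-\ell}{2}}_q.$$
Applying the same identity with $\ell$ replaced by $\ell+2$ (note $b-a$ increases by $2$, so $a$ decreases by $1$),
$$\sum_{\substack{\pi\in\I_{n}(321)\\ \fp(\pi)\ge \ell+2}} q^{\maj(\pi)}=\binom{n}{\frac{n-\ell}{2}-1}_q.$$
Since the number of fixed points of an involution in $\S_n$ is congruent to $n$ modulo $2$, the condition $\fp(\pi)\ge \ell$ together with $\fp(\pi)\not\ge\ell+2$ is exactly $\fp(\pi)=\ell$. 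Subtracting the two displayed identities yields
$$\sum_{\substack{\pi\in\I_{n}(321)\\ \fp(\pi)=\ell}} q^{\maj(\pi)}=\binom{n}{\frac{n-\ell}{2}}_q-\binom{n}{\frac{n-\ell}{2}-1}_q,$$
as claimed.
\end{proof}

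The proof is almost entirely formal once Theorem~\ref{thm:fp} is in hand; the one point that requires care is the parity bookkeeping in the telescoping step. The main obstacle, to the extent there is one, is verifying that the inequality conditions $\fp(\pi)\ge b-a$ nest correctly as $a$ decreases by one and that the congruence $\fp(\pi)\equiv n\bmod 2$ makes $\{\fp\ge\ell\}\setminus\{\fp\ge\ell+2\}$ coincide exactly with $\{\fp=\ell\}$ (since no involution in $\S_n$ can have fixed-point count strictly between $\ell$ and $\ell+2$ of the same parity). Everything else reduces to transporting $\maj$ through $\genbij$ to $\area$ and reading off the $q$-binomial coefficient.
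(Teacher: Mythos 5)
Your proposal is correct and follows essentially the same route as the paper: both apply the bijection of Theorem~\ref{thm:fp} to transport $\maj$ to the area statistic $|\lambda|$ on $\Y_{a,b}$ (yielding $\binom{a+b}{a}_q$), and both obtain the second formula by subtracting the case $b-a=\ell+2$ from the case $b-a=\ell$, using the fact that $\fp(\pi)\equiv n\bmod 2$ for involutions. Your write-up merely spells out the telescoping and parity bookkeeping that the paper states in one line.
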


\begin{proof}
Applying the bijection from Theorem~\ref{thm:fp} and using that  $\maj(\pi)=\sum_{i\in\Des(\pi)} i$ and $|\lambda|=\sum_{i\in\hd(\lambda)} i$, we have
$$\sum_{\substack{\pi\in\I_{a+b}(321)\\ \fp(\pi)\ge b-a}} q^{\maj(\pi)}=\sum_{\lambda\in\Y_{a,b}} q^{|\lambda|}=\binom{a+b}{a}_q.$$

For the second formula, note that by the symmetry between excedances and anitiexcedances, the number of fixed points of $\pi\in\I_n(321)$ has the same parity as $n$. Now take $a+b=n$ and subtract the case $b-a=\ell+2$ from the case $b-a=\ell$ in the first formula.
\end{proof}

It is shown in~\cite{BBES} that
$$\sum_{\substack{\pi\in\I_n(321)\\ \des(\pi)=k}}q^{\maj(\pi)}=q^{k^2}\binom{\cn2}{k}_q\binom{\fn2}{k}_q.$$
The refinement that keeps track of the number of fixed points is the following.

\begin{corollary}
For $b\ge a\ge 0$,
$$\sum_{\substack{\pi\in\I_{a+b}(321)\\ \fp(\pi)\ge b-a\\ \des(\pi)=k}} q^{\maj(\pi)}=q^{k^2}\binom{a}{k}_q\binom{b}{k}_q.$$
Consequently, if $\ell\equiv n\bmod 2$, then
$$\sum_{\substack{\pi\in\I_{n}(321)\\ \fp(\pi)=\ell \\ \des(\pi)=k}} q^{\maj(\pi)}=q^{k^2}\left[\binom{\frac{n-\ell}{2}}{k}_q\binom{\frac{n+\ell}{2}}{k}_q-\binom{\frac{n-\ell}{2}-1}{k}_q\binom{\frac{n+\ell}{2}+1}{k}_q\right].$$
\end{corollary}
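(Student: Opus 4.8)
The plan is to reduce the statement to a counting problem about Young diagrams, exactly as in the proof of the preceding corollary, via the bijection $\genbij$ from Theorem~\ref{thm:fp}. Given $\pi\in\I_{a+b}(321)$ with $\fp(\pi)\ge b-a$, I would set $\lambda=\genbij(\pi)\in\Y_{a,b}$, so that $\Des(\pi)=\hd(\lambda)$. Two observations then transport the left-hand side onto statistics of $\lambda$. First, since $\maj(\pi)=\sum_{i\in\Des(\pi)}i=\sum_{i\in\hd(\lambda)}i=|\lambda|$, the major index becomes the area of $\lambda$. Second, $\des(\pi)=|\Des(\pi)|=|\hd(\lambda)|$, and by the very definition of the hook decomposition the number of entries of $\hd(\lambda)$ equals the side length of the Durfee square of $\lambda$. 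Hence the condition $\des(\pi)=k$ is precisely the condition that $\lambda$ has Durfee square of side $k$, and
$$\sum_{\substack{\pi\in\I_{a+b}(321)\\ \fp(\pi)\ge b-a\\ \des(\pi)=k}} q^{\maj(\pi)}=\sum_{\substack{\lambda\in\Y_{a,b}\\ \text{Durfee side}=k}} q^{|\lambda|}.$$

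Next I would evaluate the right-hand side by the classical Durfee-square decomposition. A partition $\lambda\in\Y_{a,b}$ whose Durfee square has side $k$ decomposes uniquely into its $k\times k$ Durfee square, contributing area $k^2$; the partition lying to the right of the square, which fits inside a $k\times(b-k)$ box; and the partition lying below the square, which fits inside an $(a-k)\times k$ box. These three pieces can be chosen independently, and using the fact recalled in Section~\ref{sec:recursive} that $\binom{c+d}{c}_q$ enumerates by area the diagrams in $\Y_{c,d}$, the two outer pieces contribute $\binom{b}{k}_q$ and $\binom{a}{k}_q$ respectively. Multiplying the three contributions yields $q^{k^2}\binom{a}{k}_q\binom{b}{k}_q$, which is the first formula.

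For the second formula I would argue exactly as in the previous corollary. Writing $a+b=n$, the symmetry between excedances and antiexcedances forces $\fp(\pi)\equiv n\pmod 2$, so restricting to $\fp(\pi)\ge b-a=\ell$ counts precisely the involutions with $\fp(\pi)\in\{\ell,\ell+2,\dots\}$. Taking the first formula with $b-a=\ell$ (so $a=\frac{n-\ell}{2}$, $b=\frac{n+\ell}{2}$) and subtracting the same formula with $b-a=\ell+2$ (so $a=\frac{n-\ell}{2}-1$, $b=\frac{n+\ell}{2}+1$) isolates the contribution of $\fp(\pi)=\ell$, giving the bracketed expression.

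The only genuinely delicate point is the identification of $\des(\pi)$ with the side of the Durfee square of $\lambda$; everything else is bookkeeping. That identification, however, is immediate once Theorem~\ref{thm:fp} is in hand, since $\des(\pi)=|\hd(\lambda)|$ and $|\hd(\lambda)|$ is the Durfee side by the definition of $\hd$. Thus the main work reduces to the standard Durfee decomposition of partitions inside a rectangle and the parity-based inclusion–exclusion in $\fp$, neither of which presents any real obstacle.
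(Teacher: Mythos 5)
Your proposal is correct and follows essentially the same route as the paper's proof: both transport the sum through the bijection of Theorem~\ref{thm:fp} so that $\maj$ becomes area and $\des(\pi)=k$ becomes the condition that $\hd(\lambda)$ has $k$ hooks (equivalently, Durfee side $k$), then apply the standard Durfee-square decomposition to obtain $q^{k^2}\binom{a}{k}_q\binom{b}{k}_q$, and derive the second formula by the same parity-based subtraction of the case $b-a=\ell+2$ from the case $b-a=\ell$. The only cosmetic difference is that the paper describes the two outer pieces as lying in $\Y_{k,a-k}$ and $\Y_{b-k,k}$ while you use $\Y_{a-k,k}$ and $\Y_{k,b-k}$, which agree up to conjugation and give the same $q$-binomial contributions.
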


\begin{proof}
By the bijection in Theorem~\ref{thm:fp}, the left hand side of the first formula is the generating polynomial with respect to area for Young diagrams in $\Y_{a,b}$ whose hook decomposition has $k$ hooks. The right hand side is obtained by decomposing such diagrams in three pieces: the top-left $k\times k$ square, which contributes $q^{k^2}$; a partition in $\Y_{k,a-k}$, which contributes $\binom{a}{k}_q$, and a partition in $\Y_{b-k,k}$, which contributes $\binom{b}{k}_q$.

The second formula follows immediately substituting $a+b=n$ and subtracting the case $b-a=\ell+2$ from the case $b-a=\ell$.
\end{proof}

\end{document}